\theoremstyle{definition}
\newtheorem{teo}{Theorem}    [section]
\newtheorem{lem}{Lemma}      [section]
\newtheorem{eje}{Example}    [section]
\newtheorem{defi}{Definition}[section]
\newtheorem{pro}{Proposition}[section]
\newtheorem{obs}{Remark}     [section]
\newtheorem{coro}{Corollary} [section]
\newtheorem{cons}{Construction
} [section]
\newcolumntype{?}{!{\vrule width 1,5pt}}
\newcommand{\G}{\mathcal{S}}
\newcommand{\Z}{{\ensuremath{\mathbb{Z}}}}
\numberwithin{equation}{section}
\newcommand{\Field}{\mathbb{F}}
\newcommand{\B}{\mathcal{B}}
\newcommand{\R}{\mathcal{R}}
\newcommand{\I}{\mathcal{I}}
\newcommand{\C}{\mathcal{C}}
\newcommand{\A}{\mathcal{A}}
	\title{\textbf{Sidon sets and $C_4$-saturated graphs}}
\author{David F. Daza \footnote{Partially supported by COLCIENCIAS No. 110371250560 and VRI - Unicauca No. 4400.}  \hspace{0.01cm} \footnote{Email: \url{daviddaza@unicauca.edu.co}}  \hspace{0.4cm}  Carlos A. Trujillo \footnotemark[1] \hspace{0.01cm} \footnote{Email: \url{trujillo@unicauca.edu.co}}\\ \vspace{-0.2cm}
 \small \textit{Departamento de Matemáticas} \\ \vspace{-0.2cm}
  \small \textit{Universidad del Cauca} \\ 
	 \small  \textit{Popayán, Colombia}\\ \and
 Fernando A. Benavides \footnote{Partially supported by VIPRI - UdeNar No. 1475.} \hspace{0.01cm} \footnote{Email: \url{fandresbenavides@udenar.edu.co}}\\ \vspace{-0.2cm}
\small \textit{Departamento de Matemáticas y Estadística}\\ \vspace{-0.2cm}
\small \textit{Universidad de Nariño}\\ \vspace{-0.2cm}
\small \textit{San Juan de Pasto, Colombia}}
	\date{}
\begin{document}

\maketitle

\begin{abstract}
\noindent
The problem of determining the Turán number of $C_4$ is a well studied problem that dates back to a paper of Erd{\H o}s from 1938. It is known that Sidon sets can be used to construct $C_4$-free graphs. If $\A$ is a Sidon set in the abelian group $X$, the sum graph $G_{X, \A}$ with vertex set $X$ and edges set $E=\{\{x, y\}:x\neq y, x+y\in \A\}$ is $C_4$-free. Using the sum graph of a Sidon set of type Singer we verify a conjecture of  Erd{\H o}s and Simonovits concerning the number of copies of $C_4$ in a graph with $ex(q^2+q+1, C_4)+1$ edges. Further, we give a sufficient condition for the sum graph of a Sidon set to be $C_4$-saturated and describe new $C_4$-saturated graphs.

\vspace{0.2cm} 

\noindent
{\small{\textbf{Keywords}: \textit{Cycle of length 4, Sidon set, Turán number, $C_4$-Saturated, Polarity graph.}}}
\end{abstract}

\section{Introduction}

In 1938, Erd\H{o}s asked how many edges a graph with $n$ vertices without $C_4$ may have. This number is denoted
by $ex(n, C_4)$ and is called the Turán number for $C_4$. In general, for a graph $H$ the Turán number $ex(n, H)$ is
the maximmun number of edges that a graph on $n$ vertices can have without containing a copy of $H$. A graph
on $n$ vertices with $ex(n, H)$ edges and does not contain $H$ as subgraph is called extremal. The problem of
determining the number $ex(n, H)$ belongs to the field of extremal graph theory. It is known that
$ex(n, C_4)\leq \frac{1}{2} n^{3/2}+ o(n^{3/2})$, in fact for some values of $n$ the Turán number for $C_4$ has been determined. For example, using computer searches $ex(n, C_4)$ was calculated for $n\leq 21$ by Clapham, Flockhart and Sheehan
in \cite{Clapham} and it was extended for $n\leq 31$ by Rowlinson and Yuansheng in \cite{Rowlinson}. Füredi in \cite{Furedi} proved that $ex(q^2+q+1, C_4) \geq \frac{1}{2}q (q+1)^2$ when $q$ is a power of 2. On the other hand, Brown in \cite{Brown} and Erd\H{o}s, Rényi and Sós in \cite{Erdos2} proved independently that $ex(q^2 + q + 1, C_4)\geq \frac{1}{2} q(q + 1)^2$ where $q$ is a prime power. They used the
Erd\H{o}s-Rényi graphs $ER_q$ which are derived from orthogonal polarity graphs of the projective plane $PG(2, q)$.
Later, Füredi in \cite{Furedi2} proved that $ex(q^2+q+1, C_4)\leq \frac{1}
{2} q(q+1)^2$ where $q > 13$ is a prime power and the equality
is satisfied when the graph is an orthogonal polarity graph of a projective plane of order $q$. This shows the
exact value for $n = q^2 + q + 1$ with $q > 13$ a prime power. In \cite{Abreu}, Abreu, Balbuena and Labbate, by deleting carefully chosen vertices from the Erd\H{o}s-Rényi graph, exhibited lower bounds to $ex(n, C_4)$ for certain values of $n$. For instance, they proved that

\begin{center}
$ex(q^2-q-2, C_4)\geq\left\{ \begin{array}{lcc}
             (\frac{1}{2}q-1)(q^2-1) &   \text{if} \hspace{0.2cm} q \hspace{0.2cm} \text{is odd}\\
             \\ \frac{1}{2}q^3-q^2 & \text{if} \hspace{0.2cm} q \hspace{0.2cm} \text{is even}
				\end{array}
   \right.$
\end{center}

 \noindent
where $q$ is any prime power. Another result was established by Firke, Kosek, Nash and Williford in \cite{Firke}, they proved that $ex(q^2 +q, C_4)\leq \frac{1}{2} q(q+1)^2- q$ for even $q$. Recently, Tait and Timmons in \cite{Tait} improved the above lower bound given by Abreu, Balbuena and Labbate. They constructed a $C_4$-free graph $G_{q, \theta}$ with $q^2-q-2$ vertices and at least $\frac{1}{2} q^3-q^2-O(q^{3/4})$ edges, by removing a particular subgraph from $G_{q, \theta}$. In this case, $G_{q, \theta}$ is the sum graph associated to a Sidon set of type Bose-Chowla. In the field of extremal graph theory there are still many problems unsolved, for instance in \cite{Erdos3}, Erd\H{o}s and Simonovits conjectured that if $G$ is any graph on $n$ vertices with $ex(n, C_4)$ + 1 edges, then $G$ must contain at least $n^{1/2}+o(n^{1/2})$ copies of $C_4$.

\vspace{0.2cm}

\begin{teo}\label{pr}
Let $(X, +)$ be a finite abelian group and $\A$ be a Sidon set in $X$ with zero-deficiency and $|\A|=\sqrt{|X|}-\delta$. If $H$ is a graph obtained by adding an edge to the sum graph $G_{X, \A}$ then $H$ contains at least $\sqrt{|X|}+o(\sqrt{|X|})$ copies of $C_4$. 
\end{teo}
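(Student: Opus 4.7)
The plan is to reduce counting $C_4$'s in $H = G_{X,\A}+\{u,v\}$ to counting solutions of an additive equation in $\A$. Since $\A$ is Sidon, the sum graph $G_{X,\A}$ is $C_4$-free, so every $C_4$ in $H$ must use the new edge $\{u,v\}$, and since $\{u,v\}$ was not already an edge of $G_{X,\A}$ the value $s:=u+v$ does not lie in $\A$. A $C_4$ through $\{u,v\}$ is uniquely specified by the ordered pair $(x,y)$ where $x$ is the neighbor of $u$ and $y$ the neighbor of $v$ around the cycle, subject to $x\neq y$, $x,y\notin\{u,v\}$, and $\{u,x\},\{x,y\},\{y,v\}\in E(G_{X,\A})$. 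Setting $u+x=a_1$, $x+y=a_2$, $y+v=a_3$ with $a_1,a_2,a_3\in\A$, each such pair $(x,y)$ corresponds to a unique triple $(a_1,a_2,a_3)\in\A^3$ satisfying the single relation
\[
a_1 - a_2 + a_3 \;=\; s.
\]
Hence the number of $C_4$'s in $H$ equals the number of such triples, minus the $O(1)$ degenerate configurations where two of $\{u,v,x,y\}$ coincide; in each degenerate case one coordinate gets fixed (for example, $a_1=2u$ if $x=u$) and the Sidon property allows at most one compatible choice of the remaining two.

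Next I would rewrite the equation as $a_1 - a_2 = s - a_3$ and sum over $a_3\in\A$. Introducing the difference representation function $r_\A(d) := |\{(a,b)\in\A^2 : a-b = d\}|$ and the set $D(\A) := (\A-\A)\setminus\{0\}$ of non-zero differences, one gets
\[
\#\{(a_1,a_2,a_3)\in\A^3 : a_1-a_2+a_3 = s\} \;=\; \sum_{a_3\in\A} r_\A(s-a_3).
\]
The Sidon property forces $r_\A(d)\in\{0,1\}$ for $d\neq 0$, while $s\notin\A$ guarantees $s-a_3\neq 0$ for every $a_3\in\A$. Consequently the right-hand side is exactly $|\{a_3\in\A : s-a_3\in D(\A)\}|$.

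The decisive step is then to invoke the zero-deficiency hypothesis to show that $D(\A)$ covers almost all of $X\setminus\{0\}$ --- precisely, that the complement $X\setminus(D(\A)\cup\{0\})$ has size $o(\sqrt{|X|})$. Under this assumption, the number of $a_3\in\A$ for which $s-a_3$ lies outside $D(\A)$ is at most the size of this complement, and combining with the $O(1)$ loss to degeneracy, the number of $C_4$'s in $H$ is at least
\[
|\A| - o(\sqrt{|X|}) - O(1) \;=\; \sqrt{|X|} + o(\sqrt{|X|}),
\]
using $|\A|=\sqrt{|X|}-\delta$. The hardest part will be making precise this quantitative implication from zero-deficiency to the smallness of the missed differences: for Sidon sets of Singer type $D(\A)$ is exactly $X\setminus\{0\}$, so the count is already $|\A|$ without any estimate, but for the general hypothesis stated in the theorem one must translate the deficiency parameter into a bound on the exceptional translate $\A\cap\bigl(s-(X\setminus(D(\A)\cup\{0\}))\bigr)$.
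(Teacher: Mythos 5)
Your proposal is correct and is essentially the paper's own argument: the paper defines $T(z)=\{(a_1,a_2,a_3)\in\A^3 : z=a_1-a_2+a_3\}$, shows that at most one tuple falls into each of the three degenerate configurations (its Lemma on $T(z)$), concludes that adding an edge $\{u,v\}$ creates at least $|T(u+v)|-3$ copies of $C_4$, and uses zero-deficiency to get $|T(u+v)|=|\A|$. The only superfluous worry in your write-up is the final ``hardest part'': by the paper's definition, zero-deficiency means $d(\A)=|X|-|\A-\A|=0$, i.e.\ $\A-\A=X$ exactly, so $D(\A)=X\setminus\{0\}$, every $s-a_3$ with $s\notin\A$ lies in $D(\A)$, and the count is exactly $|\A|=\sqrt{|X|}-\delta$ with no further estimate needed.
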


\textbf{Results.} Note that if the number of edges of the sum graph of Theorem \ref{pr} is $ex(|X|, C_4)$ then the conjecture of Erd\H{o}s and Simonovits is verified. The unique known Sidon sets with zero deficiency are the Sidon sets of type Singer, these sets verify the hypothesis of Theorem \ref{pr} and we prove that the number of edges of the sum graph of a Sidon set of type Singer is $ex(q^2+q+1, C_4)$ whenever that $q>13$ is a prime power. Thus for these graphs we verify the conjecture of Erd\H{o}s and Simonovits. On the other hand, a graph $G$ is called $H$-saturated if $G$ does not contain a subgraph isomorphic to $H$ but the addition of an edge joining any pair of nonadjacent vertices of $G$ completes a copy of $H$. We use Sidon sets (Singer,
Ruzsa, cartesian products) to construct $C_4$-saturated graphs and for each sum graph we determine the number of copies of $C_4$ obtained
by adding an edge to each graph. Finally, we find relations between the maximality of a Sidon set and its corresponding graph associated.

\vspace{0.5cm}

\textbf{Organization.} This paper is divided in three sections, in Section 2 we include important aspects of Sidon sets defined on additive abelian groups, for example we describe important constructions of this kind of sets (Bose-Chowla, Singer, Ruzsa, etc), which we shall use to construct $C_4$-saturated graphs. In Section 3, we prove important relations between the maximality of Sidon
sets and $C_4$-saturated graphs. In Subsection 3.1, we prove that the sum graph associated to each
Sidon set is $C_4$-saturated and as a corollary of this results we prove that this kind of graphs are $C_4$-saturated if
and only if the corresponding Sidon set is maximal. Finally, in this subsection we prove Theorem \ref{pr} and with this theorem and the Sidon sets of type Singer we give a family of graphs that verify the conjecture of Erd\H{o}s and Simonovits.

\section{Sidon sets}

Let $X$ be an (additive) abelian group, a non empty subset $\A \subset X$ is a \textbf{Sidon set} in $X$ if 
\begin{equation}\label{s1}
a+b=c+d  \hspace{0.3cm} \text{implies that} \hspace{0.3cm} \{a, b\}=\{c, d\},
\end{equation}

\noindent
 for all $a, b, c , d \in \A$. We say that a Sidon set $\A$ of an abelian group $X$ is \textbf{maximal} if it is not properly contained in another Sidon set in $X$.  Sidon sets were considered in integers by Simon Sidon in \cite{Sidon1}. According to \cite{Erdos1}, Simon Sidon introduced Sidon sets to Erd\H{o}s in 1932 or 1933 and he was interested in how large a Sidon set $\A$ can be if $\A\subset \{1, 2, \ldots, n\} = [1, n]$, in other words Sidon wanted to determine the function
\begin{center}
$F_2(n)=\max\{|\A|: \A \hspace{0.1cm} \text{is a Sidon set  and} \hspace{0.1cm} \A\subset [1, n]\}$.
\end{center}

\noindent
It is known that $F_2(n)\sim \sqrt{n}$. In order to obtain lower bounds of $F_2(n)$ it is necesary to construct Sidon sets,
similarly with the number $ex(n, C_4)$. For instance in the literature we can find many constructions of Sidon
sets as Bose-Chowla in \cite{Bose}, Singer in \cite{Singer}, Ruzsa in \cite{Ruzsa} or constructions on cartesian products $\Field_p\times \Field_p, \Field_p^+\times \Field_p^*$ and $\Field_p^* \times \Field_p^*$ in \cite{Caicedo}. Today Sidon sets have been applied to many areas as communications, fault-tolerant
distributed computing, coding theory, graph theory, see \cite{Blum, Golomb, Kovacevi, Klonowska}.\\

As the equation \ref{s1} implies that $a-d =c-b$, the Sidon sets are defined as such sets with the property that all non-zero differences of elements of that set are different. By counting the number of differences $a-b$, we can see that if $\A$ is a Sidon set in $X$, then $|A|<\sqrt{|X|}+1/2$. The most interesting Sidon sets are those which have large cardinality, that is, $|\A|=\sqrt{|X|}-\delta$ where $\delta$ is a small number.\\

If $\A$ is a finite Sidon set in an abelian group $(X, +)$, then the \textbf{difference set} is
defined as usual; i.e.,

\vspace{-0.4cm}

\begin{center}
$\A-\A:=\{a-b: a, b\in \A \}$.
\end{center}
 
\noindent
Note that if $|\A|=k$ then $|\A\ominus \A|= 2 \genfrac(){0pt}{0}{k}{2}$ where $\A\ominus \A=(\A-\A)$\textbackslash$\{0\}$. \\

Moreover, if $X$ is finite we define the deficiency of the set $\A$ denoted by $d(\A)$ as the \mbox{cardinal} of the set $X$\textbackslash$\A-\A$, this is, $d(\A):=|X|-|\A-\A|$. Therefore if $d(\A)=0$ then $\A-\A=X$.\\

The following constructions of Sidon sets and their respective lemmas can be found in \cite{Caicedo, Trujillo}, these lemmas will be fundamental in the proof of Theorem \ref{teo217}. We present the proofs of the lemmas for completeness. We know that the Sidon sets of the Constructions 2.1 to 2.5 have maximal cardinality in their ambient group $X$ and that the cardinal of a Sidon set in each of these constructions (including Construction 2.6) is $\sqrt{|X|}-\delta$ for some $\delta\leq 1$.

\begin{cons}\textbf{(Bose-Chowla)} Let $q$ be a prime power, $h\geq 2$ be a integer, $(\Field_q, +, \cdot)$ be the finite field with $q$ elements and $\theta$ a primitive element of $\Field_{q^h}$. The set, 
\begin{center}
$\B:=\log_{\theta}(\theta+\Field_q):=\{\log_{\theta}(\theta+a):a\in \Field_q\}$,
\end{center}
 is a Sidon set in $(\mathbb{Z}_{q^h-1}, +)$, with $q$ elements.  
\end{cons}

\begin{lem}\label{b1}
If $\B$ is a Sidon set of type Bose-Chowla in $(\mathbb{Z}_{q^2-1}, +)$ then
\begin{center}
$\B\ominus \B=\Z_{q^2-1}$\textbackslash$M_{q+1}$, where $M_{q+1}=\{x \in \Z_{q^2-1}:x\equiv 0 \bmod (q+1)\}$.
\end{center}
\end{lem}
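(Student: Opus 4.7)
The plan is to prove the equality by a cardinality argument combined with one direct inclusion. First I would verify that both sides have the same size. Since $\mathcal{B}$ is a Sidon set with $q$ elements, every non-zero difference appears with multiplicity one, so $|\mathcal{B}\ominus\mathcal{B}|=2\binom{q}{2}=q(q-1)$. On the other side, because $(q+1)\mid (q^2-1)$, the multiples of $q+1$ in $\mathbb{Z}_{q^2-1}$ form a subgroup of order $(q^2-1)/(q+1)=q-1$, so $|\mathbb{Z}_{q^2-1}\setminus M_{q+1}|=q^2-1-(q-1)=q(q-1)$. Thus it suffices to prove the inclusion $\mathcal{B}\ominus\mathcal{B}\subseteq \mathbb{Z}_{q^2-1}\setminus M_{q+1}$.

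For this inclusion, take distinct $a,b\in\mathbb{F}_q$ and set $e_a=\log_\theta(\theta+a)$, $e_b=\log_\theta(\theta+b)$. I need to show $e_a-e_b\not\equiv 0\pmod{q+1}$. The key observation is that $\theta^{q+1}$ has multiplicative order $(q^2-1)/(q+1)=q-1=|\mathbb{F}_q^*|$, and since it generates a subgroup of $\mathbb{F}_{q^2}^*$ of the right size, $\langle \theta^{q+1}\rangle=\mathbb{F}_q^*$. Hence $e_a-e_b\equiv 0 \pmod{q+1}$ would mean $\theta^{e_a-e_b}=(\theta+a)/(\theta+b)\in\mathbb{F}_q^*$. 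Writing this common value as $c\in\mathbb{F}_q^*$, one gets $(1-c)\theta=cb-a$. If $c\neq 1$, then $\theta=(cb-a)/(1-c)\in\mathbb{F}_q$, contradicting that $\theta$ is a primitive element of $\mathbb{F}_{q^2}$; if $c=1$, then $a=b$, contradicting the choice $a\neq b$.

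The only subtle step is the identification $\langle\theta^{q+1}\rangle=\mathbb{F}_q^*$, which I expect to be the main obstacle to state cleanly; it follows from the fact that $\mathbb{F}_q^*$ is the unique subgroup of order $q-1$ in the cyclic group $\mathbb{F}_{q^2}^*$. Once that is in place, the rest of the argument is elementary algebra over $\mathbb{F}_{q^2}$, and together with the cardinality count the equality $\mathcal{B}\ominus\mathcal{B}=\mathbb{Z}_{q^2-1}\setminus M_{q+1}$ follows.
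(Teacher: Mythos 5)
Your proof is correct, and its overall skeleton (compute $|\mathcal{B}\ominus\mathcal{B}|=q(q-1)$, compute $|\mathbb{Z}_{q^2-1}\setminus M_{q+1}|=q(q-1)$, and reduce the equality to a single inclusion) is exactly the paper's. The difference is that the paper simply asserts the disjointness $(\mathcal{B}\ominus\mathcal{B})\cap M_{q+1}=\emptyset$ without justification, whereas you actually prove it: you identify $\langle\theta^{q+1}\rangle$ with $\mathbb{F}_q^*$ via the uniqueness of subgroups of a given order in the cyclic group $\mathbb{F}_{q^2}^*$, translate $e_a-e_b\equiv 0\pmod{q+1}$ into $(\theta+a)/(\theta+b)=c\in\mathbb{F}_q^*$, and derive a contradiction from $(1-c)\theta=cb-a$ in both cases $c=1$ and $c\neq 1$. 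That containment is the only nontrivial content of the lemma, so your version is the more complete argument; the paper's reader must either already know this property of Bose--Chowla sets or reconstruct precisely the computation you give.
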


\begin{proof}
Since that $\B$ is a Sidon set and $|\B|=q$ then $|\B\ominus \B|=2 \genfrac(){0pt}{0}{q}{2}=q(q-1)=q^2-q$. Morevover, as $(\B\ominus \B)\cap M_{q+1}=\emptyset$ then $|\Z_{q^2-1}|-|M_{q+1}|=q^2-1-(q-1)=q^2-q=|\B\ominus \B|$.
\end{proof}

\begin{cons}\textbf{(Singer)} \label{S}
Let $\B$ be a Sidon set of type Bose-Chowla in $\mathbb{Z}_{q^3-1}$ and \mbox{$\G:=\B\bmod (q^2+q+1)$}. Then $\G_0=\G\cup\{0\}$ is a Sidon set in $\mathbb{Z}_{q^2+q+1}$ with $q+1$ elements.
\end{cons} 

\begin{obs}
In the above theorem $\G$ is a Sidon set in $\mathbb{Z}_{q^2+q+1}$ with $q$ elements. Therefore \\ $|\G\ominus\G|= 2 \genfrac(){0pt}{0}{q}{2}$. 
\end{obs}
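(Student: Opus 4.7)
The plan is to derive the remark directly from Construction \ref{S} together with the Bose--Chowla description of $\mathcal{B}$. The statement has two pieces: first, that $\G$ is itself Sidon in $\mathbb{Z}_{q^2+q+1}$ with exactly $q$ elements, and second, the cardinality $|\G\ominus\G|=2\binom{q}{2}$. The second piece is immediate from the first: once we know $\G$ is a Sidon set of size $q$ in an abelian group, equation (2.1) forces the $2\binom{q}{2}$ ordered nonzero differences $a-b$ with $a\neq b\in\G$ to be pairwise distinct, which is exactly what $|\G\ominus\G|=2\binom{q}{2}$ says. So the only real content is the first piece.

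For the first piece I would proceed in two short moves. First, $\G\subset\G_0$ and $\G_0$ is Sidon in $\mathbb{Z}_{q^2+q+1}$ by Construction \ref{S}; since any subset of a Sidon set is a Sidon set (a trivial check from (2.1)), $\G$ is also Sidon. Second, to get $|\G|=q$ I need two facts: that $0\notin\G$ (so that passing from $\G$ to $\G_0$ genuinely adds one element, giving $|\G_0|=|\G|+1=q+1$ as in Construction \ref{S}), and that the reduction map $\pi:\mathbb{Z}_{q^3-1}\to\mathbb{Z}_{q^2+q+1}$ is injective on $\B$.

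Both facts come from the same observation. Since $q^3-1=(q-1)(q^2+q+1)$, the kernel of $\pi$ is the unique subgroup of $\mathbb{Z}_{q^3-1}$ of order $q-1$, which under the isomorphism $\mathbb{Z}_{q^3-1}\cong \Field_{q^3}^*$ given by $k\mapsto \theta^k$ corresponds precisely to the subgroup $\Field_q^*$. Therefore, for $a,b\in\Field_q$, $\log_\theta(\theta+a)\equiv \log_\theta(\theta+b)\pmod{q^2+q+1}$ iff $(\theta+a)/(\theta+b)\in\Field_q^*$; writing $(\theta+a)=c(\theta+b)$ with $c\in\Field_q^*$ and comparing the coefficient of $\theta$ (i.e.\ using that $\{1,\theta,\theta^2\}$ is a basis of $\Field_{q^3}$ over $\Field_q$, so $\theta\notin\Field_q$) forces $c=1$ and hence $a=b$. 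This gives injectivity of $\pi|_{\B}$, so $|\G|=|\B|=q$. The same argument with $b=0$-type manipulation handles $0\notin\G$: $0\in\G$ would mean $\theta+a\in\Field_q^*$ for some $a\in\Field_q$, again forcing $\theta\in\Field_q$.

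The only mildly subtle step is the subfield/subgroup identification above; everything else is a one-line application of the Sidon property or Construction \ref{S}. So the main obstacle, such as it is, is making that subfield computation cleanly, but it is standard once one notes the factorization $q^3-1=(q-1)(q^2+q+1)$.
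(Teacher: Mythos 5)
Your proof is correct. The paper states this remark without proof, treating it as an immediate consequence of Construction \ref{S} together with the general counting identity $|\A\ominus\A|=2\binom{k}{2}$ for a $k$-element Sidon set; that is exactly the skeleton of your argument ($\G\subset\G_0$ inherits the Sidon property, $|\G|=q$, then count). The one place where you do substantially more work than needed is the verification that $|\G|=q$ and $0\notin\G$ via the identification of the kernel of the reduction map with $\Field_q^*$ and the linear independence of $1,\theta$ over $\Field_q$. That computation is correct --- it is essentially the hidden content of Singer's construction --- but if one takes Construction \ref{S} at face value it can be bypassed by pure counting: $\G$ is the image of the $q$-element set $\B$ under reduction, so $|\G|\le q$, while $q+1=|\G_0|=|\G\cup\{0\}|\le|\G|+1$ gives $|\G|\ge q$; equality then forces both $|\G|=q$ and $0\notin\G$ at once, with no field theory. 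So your route amounts to a self-contained re-derivation of part of the construction, whereas the paper's implicit route simply quotes the construction and the earlier counting identity.
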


\begin{lem}\label{Singer1}
$\G_{0}\ominus \G_{0}=\mathbb{Z}_{q^2+q+1}$\textbackslash\{0\}.
\end{lem}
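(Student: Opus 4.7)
The plan is to prove the lemma by a direct counting argument, exploiting the fact that $\G_0$ is a Sidon set with the ``perfect'' cardinality $q+1$ in the group $\Z_{q^2+q+1}$.

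First I would observe the containment $\G_0 \ominus \G_0 \subseteq \Z_{q^2+q+1}\setminus\{0\}$, which is immediate from the definition $\G_0 \ominus \G_0 = (\G_0 - \G_0)\setminus\{0\}$. So the whole lemma reduces to verifying that the two sides have the same cardinality, namely $q^2+q$.

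Next I would compute $|\G_0 \ominus \G_0|$. By Construction \ref{S}, $\G_0 = \G \cup \{0\}$ has cardinality $q+1$ and is a Sidon set in $\Z_{q^2+q+1}$. The Sidon property guarantees that all ordered pairs $(a,b)$ with $a\neq b$ in $\G_0$ produce distinct nonzero differences $a-b$. Since there are exactly $(q+1)q$ such ordered pairs, we obtain
\begin{equation*}
|\G_0 \ominus \G_0| \;=\; 2\binom{q+1}{2} \;=\; q(q+1) \;=\; q^2+q.
\end{equation*}
On the other hand, $|\Z_{q^2+q+1}\setminus\{0\}| = q^2+q$ as well, so the containment above is forced to be an equality.

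There is essentially no obstacle here beyond citing the right facts. The only delicate point to keep in mind is that the cardinality jump from $|\G| = q$ (Remark preceding the lemma) to $|\G_0| = q+1$ is what makes the deficiency vanish: a Sidon set of size $q$ would only produce $q^2-q$ differences, leaving $2q$ elements of $\Z_{q^2+q+1}$ uncovered, whereas adjoining the single element $0$ contributes exactly the $2q$ new differences $\pm g$ for $g \in \G$, which the Sidon property forces to be pairwise distinct from each other and from $\G \ominus \G$. Either framing of the counting yields the result.
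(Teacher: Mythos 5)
Your proof is correct and follows essentially the same counting strategy as the paper: both arguments reduce the lemma to checking that $|\G_0\ominus\G_0|=q^2+q=|\mathbb{Z}_{q^2+q+1}\setminus\{0\}|$ and conclude from the obvious inclusion. The only cosmetic difference is that you apply the general formula $|\A\ominus\A|=2\binom{|\A|}{2}$ directly to the Sidon set $\G_0$ of size $q+1$, whereas the paper counts by decomposing $\G_0\ominus\G_0$ into the pairwise disjoint pieces $(\G\ominus\G)\cup\G\cup(-\G)$; your version is slightly more direct since that formula is already stated in Section 2.
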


\begin{proof}
Since $|\G\ominus\G|=q^2-q, |\G|=|-\G|=q$, the sets $\G-\G, \G, -\G$ are pairwise disjoint and $\G_0\ominus \G_0=(\G-\G)\cup \G\cup(-\G)$ then,
\begin{align*}
|\G_0\ominus\G_0|&=|(\G\ominus\G)\cup \G\cup(-\G)|,\\
         &=|(\G\ominus\G)|+|\G|+|(-\G)|,\\
				 &= (q^2-q)+q+q,\\
				 &= q^2+q,\\
				 &= |\mathbb{Z}_{q^2+q+1} \text{\textbackslash}\{0\}|,
\end{align*}
therefore, $\G_0\ominus\G_0=\mathbb{Z}_{q^2+q+1}$\textbackslash\{0\}. 
\end{proof}

\begin{cons}\textbf{(Ruzsa)} \label{R}
If $\theta$ is a primitive element of the finite field $\mathbb{Z}_{p}$ then, 
\begin{center}
$\mathcal{R}:=\{x\equiv ip-\theta^i(p-1)(\bmod p^2-p):1\leq i\leq p-1\}$,
\end{center}

\noindent
is a Sidon set in $\mathbb{Z}_{p^2-p}$ with $p-1$ elements. 
\end{cons}

\begin{lem} \label{Ru}
$\R\ominus \R=\mathbb{Z}_{p^2-p}$\textbackslash{($M_p\cup M_{p-1})$}, where $M_i=\{x\in \mathbb{Z}_{p^2-p}: x\equiv 0(\bmod \, i) \}$.
\end{lem}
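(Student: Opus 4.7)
The plan is to prove the lemma by combining one inclusion with a cardinality count. First I note that $\gcd(p,p-1)=1$ implies $\mathrm{lcm}(p,p-1)=p^2-p$, so $M_p\cap M_{p-1}=\{0\}$ inside $\mathbb{Z}_{p^2-p}$. A direct count gives $|M_p|=p-1$ and $|M_{p-1}|=p$, hence by inclusion--exclusion $|M_p\cup M_{p-1}|=2p-2$, and its complement in $\mathbb{Z}_{p^2-p}$ has size $(p^2-p)-(2p-2)=(p-1)(p-2)$. On the other hand, since $\R$ is a Sidon set with $|\R|=p-1$, the general formula for Sidon difference sets gives $|\R\ominus\R|=2\binom{p-1}{2}=(p-1)(p-2)$, matching the count on the right-hand side. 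So the lemma will follow once I establish the inclusion $\R\ominus\R\subseteq\mathbb{Z}_{p^2-p}\setminus(M_p\cup M_{p-1})$.

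For that inclusion, I would fix distinct $i,j\in\{1,\dots,p-1\}$ and expand the typical difference as
\[
\bigl(ip-\theta^i(p-1)\bigr)-\bigl(jp-\theta^j(p-1)\bigr)\equiv (i-j)p-(\theta^i-\theta^j)(p-1)\pmod{p^2-p}.
\]
Reducing modulo $p$, with $p-1\equiv -1$, this becomes $\theta^i-\theta^j$, which is nonzero in $\mathbb{Z}_p$ because $\theta$ is a primitive element and $i,j$ are distinct representatives in $\{1,\dots,p-1\}$, so $\theta^i\not\equiv\theta^j\pmod p$. Reducing modulo $p-1$, with $p\equiv 1$, this becomes $i-j$, which is nonzero modulo $p-1$ since $0<|i-j|\leq p-2$. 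Therefore no nonzero difference of elements of $\R$ belongs to $M_p\cup M_{p-1}$.

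The cleanest way to package the argument is via the Chinese remainder isomorphism $\mathbb{Z}_{p^2-p}\cong\mathbb{Z}_p\times\mathbb{Z}_{p-1}$, under which the Ruzsa element $x_i=ip-\theta^i(p-1)$ corresponds to the pair $(\theta^i,i)$; the lemma then says precisely that both coordinates of any difference of two distinct $x_i,x_j$ are nonzero, and this is exactly what the two modular reductions verify. There is no real obstacle: the proof reduces to the primitivity of $\theta$ in $\mathbb{Z}_p$ together with the Sidon cardinality identity. The only thing to watch carefully is that the reductions mod $p$ and mod $p-1$ are applied to the canonical representative mod $p^2-p$, which is automatic from the CRT decomposition.
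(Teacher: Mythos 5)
Your proof is correct and follows essentially the same route as the paper: establish that $\R\ominus\R$ is disjoint from $M_p\cup M_{p-1}$ and then conclude by the cardinality count $|\R\ominus\R|=2\binom{p-1}{2}=(p-1)(p-2)=(p^2-p)-2(p-1)$. The only difference is that you actually verify the disjointness by reducing the generic difference $(i-j)p-(\theta^i-\theta^j)(p-1)$ modulo $p$ and modulo $p-1$ (equivalently, via the CRT identification $x_i\mapsto(\theta^i,i)$), whereas the paper simply asserts $(\R\ominus\R)\cap M_i=\emptyset$ without proof; your version is therefore the more complete one.
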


\begin{proof}
Note that $|M_p|=p-1$, $|M_{p-1}|=p$ and $M_p\cap M_{p-1}=\{p(p-1)\}$ because $M_p$ and $M_{p-1}$ are subsets of $\mathbb{Z}_{p^2-p}$. The above implies that,  
\begin{equation*}
|M_p\cup M_{p-1}|=|M_p|+|M_{p-1}|-|M_p\cap M_{p-1}|=2(p-1).
\end{equation*}
On the other hand, since $(\R\ominus \R)\cap M_i=\emptyset$ then   
  \begin{equation*}
(\R\ominus \R)\cap(M_p\cup M_{p-1})=\emptyset.
\end{equation*}
Now as $|\R\ominus \R|=2 \genfrac(){0pt}{0}{p-1}{2}=p^2-3p+2=p^2-p-2(p-1)$ and $\R\ominus \R\subseteq \mathbb{Z}_{p^2-p}$ then\\ 

\noindent 
$\R\ominus \R=\mathbb{Z}_{p^2-p}$\textbackslash{($M_p\cup M_{p-1})$}.

\end{proof}

Let $(\Field_{p}, +)$ be the finite field with $p$ elements and $(\Field_{p}^*, \cdot)$ its multiplicative group.  


\begin{cons}\textbf{(Cartesian Product 1)}\label{C}
If $p$ is an odd prime, then \mbox{$\C:=\{(a, a^2):a\in \Field_p\}$} is a Sidon set in $(\Field_p, +)\times (\Field_p, +)$ with $p$ elements.  
\end{cons}

\begin{lem}\label{Cuadratica}
$\C\ominus\C=\Field_p\times \Field_p$\textbackslash{$\{(0, z): z\in \Field_p\}$}.																								
\end{lem}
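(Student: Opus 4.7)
The plan is to use a cardinality argument, combining the Sidon property of $\C$ (which gives the exact size of $\C \ominus \C$) with a direct check that none of the excluded elements can appear as nonzero differences.

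First I would record the sizes. Since $\C$ has $p$ elements and is Sidon (Construction 2.5), we have
\begin{equation*}
|\C \ominus \C| = 2\binom{p}{2} = p(p-1) = p^2-p.
\end{equation*}
On the other hand, the set $\{(0,z) : z \in \Field_p\}$ has exactly $p$ elements, so its complement in $\Field_p \times \Field_p$ has $p^2 - p$ elements. Thus it suffices to prove one inclusion.

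Next I would verify the inclusion $\C \ominus \C \subseteq \Field_p \times \Field_p \setminus \{(0,z) : z \in \Field_p\}$. Any element of $\C \ominus \C$ has the form $(a,a^2) - (b,b^2) = (a-b,\, a^2-b^2)$ with $(a,a^2) \neq (b,b^2)$, which forces $a \neq b$ and therefore $a - b \neq 0$ in $\Field_p$. Hence the first coordinate of the difference is nonzero, so the difference cannot lie in $\{(0,z) : z \in \Field_p\}$.

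Combining the two steps, $\C \ominus \C$ is contained in a set of the same cardinality, so equality holds. There is no real obstacle here: the only content beyond a counting check is the trivial observation that the first coordinate of any nonzero difference in $\C \ominus \C$ is nonzero, which rules out exactly the $p$ points removed from $\Field_p \times \Field_p$.
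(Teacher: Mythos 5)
Your proof is correct and follows essentially the same route as the paper: both arguments observe that any difference $(a-b,\,a^2-b^2)$ with $(a,a^2)\neq(b,b^2)$ has nonzero first coordinate, so no element $(0,z)$ lies in $\C\ominus\C$, and then conclude equality by comparing the cardinality $|\C\ominus\C|=2\binom{p}{2}=p^2-p$ with that of the complement. There is no substantive difference between the two arguments.
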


\begin{proof}
Indeed, 
\begin{align*}
\C\ominus \C &= \{(x, x^2)-(y, y^2): x, y\in \Field_p, (x, x^2)\neq(y, y^2)\},\\  &= \{(x-y, x^2-y^2): x, y \in \Field_p, (x, x^2)\neq(y, y^2)\}. 
\end{align*}

\noindent
If $(0, z) \in \C\ominus \C$ then $(0, z)=(x-y, x^2-y^2)$, for some $x, y$ and $z$ in $\Field_p$, so $(x, x^2)=(y, y^2)$. Therefore the elements $(0, z)$, with  $z\in \Field_p$ do not belong to $\C\ominus \C$. \\
 
To see that the set $\{(0, z): z\in \Field_p\}$ contains all the elements that do not belong to $\C\ominus\C$, we do a count.\\

Since $|\Field_p\times \Field_p|=p^2, |\C|=p$, $|\{(0, z): z\in \Field_p\}|=p$ and $\C$ is a Sidon set, 
\begin{align*}
|\C\ominus\C|=2\genfrac(){0pt}{0}{p}{2}=p^2-p.
\end{align*}
This completes the proof.
\end{proof}

\begin{cons}\textbf{(Cartesian Product 2)}\label{I}
If $p$ is an odd prime, then $\I:=\{(a, a):a\in \Field_p^*\}$ is a Sidon set in $(\Field_p, +)\times (\Field_p^*, \cdot)$ with $p-1$ elements.  
\end{cons}

We denote the operation of the group $(\Field_p, +)\times (\Field_p^*, \cdot)$ by $\star$.

\begin{lem}\label{Identidad}
If $A_1=\{(0, z):z\in \Field_p^*\}$ and $A_2=\{(z, 1):z\in \Field_p^*\}$ then 
\begin{center}
$\I\ominus\I=\Field_p\times \Field_p^*$\textbackslash{$(A_1\cup A_2)$}.
\end{center}														
\end{lem}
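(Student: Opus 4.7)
The plan is to mimic the structure of Lemma \ref{Cuadratica}: first show by direct computation that no element of $A_1 \cup A_2$ can arise as a difference in $\I \ominus \I$, and then verify that the cardinalities match, so the inclusion must be an equality.

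First I would unpack the group operation. On $(\Field_p,+) \times (\Field_p^*,\cdot)$, the operation $\star$ acts coordinate-wise, so the ``difference'' of two elements $(a,a)$ and $(b,b)$ of $\I$ is $(a-b,\, ab^{-1})$. Hence
\begin{equation*}
\I \ominus \I = \{(a-b,\, ab^{-1}) : a,b \in \Field_p^*,\ a \neq b\}.
\end{equation*}
If $(a-b, ab^{-1}) \in A_1$, then $a=b$, contradicting $a \neq b$; if $(a-b, ab^{-1}) \in A_2$, then $ab^{-1}=1$ so again $a=b$, contradicting $a \neq b$. This shows $\I \ominus \I \subseteq \Field_p \times \Field_p^* \setminus (A_1 \cup A_2)$.

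To obtain the reverse inclusion I would close the argument by counting. Since $A_1$ has first coordinate $0$ while $A_2$ has first coordinate in $\Field_p^*$, the sets $A_1$ and $A_2$ are disjoint, so $|A_1 \cup A_2| = 2(p-1)$. Thus
\begin{equation*}
|\Field_p \times \Field_p^* \setminus (A_1 \cup A_2)| = p(p-1) - 2(p-1) = (p-1)(p-2).
\end{equation*}
On the other hand, because $\I$ is a Sidon set of cardinality $p-1$ (by Construction \ref{I}), we have $|\I \ominus \I| = 2\binom{p-1}{2} = (p-1)(p-2)$. The two cardinalities coincide, and combined with the inclusion above this forces equality.

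I do not expect any real obstacle; the only subtlety is keeping the multiplicative second coordinate straight (so that the ``difference'' is actually $ab^{-1}$, not $a-b$) and being explicit about why $A_1$ and $A_2$ are disjoint, which is what makes the count come out exactly to $2\binom{p-1}{2}$.
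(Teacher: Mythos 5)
Your proposal is correct and follows essentially the same route as the paper: compute $\I\ominus\I=\{(a-b,\,ab^{-1}):a,b\in\Field_p^*,\ a\neq b\}$, observe that membership in $A_1$ or $A_2$ forces $a=b$, and then match the count $2\binom{p-1}{2}=(p-1)(p-2)=p(p-1)-2(p-1)$ using the disjointness of $A_1$ and $A_2$. No gaps.
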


\begin{proof}
Indeed, 
\begin{align*}
\I\ominus \I &= \{(a, a) \star(b, b)^{-1}: a, b\in \Field_p^*, (a, a)\neq(b, b)\},\\  &= \{(a-b, ab^{-1}): a, b \in \Field_p^*, (a, a)\neq(b, b)\}. 
\end{align*}
Since $(a, a)\neq(b, b)$, the elements of the difference set never have 0 in the first component, so $(a-b, ab^{-1})\neq(0, z)$, for all $z\in\Field_ p^*$. On the other hand, since $a\neq b$ and $b\in\Field_p^*$ then $ab^{-1}\neq1$ and hence the elements of the form $(z, 1)$, $z\in\Field_ p^*$ do not belong to the  difference set.

\vspace{0.2cm}

To conclude the proof, note that $|\Field_p\times\Field_p^*|=p(p-1)$, $|\I|=p-1$, $|A_1|=|A_2|=p-1$ and $|A_1\cup A_2|=2(p-1)$ since $A_1\cap A_2=\emptyset$. Therefore,
\begin{align*}
|\I\ominus\I|=2\genfrac(){0pt}{0}{p-1}{2} &= p^2-3p+2,\\
                                          &= p(p-1)-2(p-1).
\end{align*}
\end{proof}

\begin{cons}\textbf{(Cartesian Product 3)}\label{CO}
If $p$ is an odd prime and $\alpha$ is an element in $\Field_p^*$ then $\I_{\alpha}:=\{(a-\alpha, a): a \in \Field_p^*, a\neq\alpha \}$ is a Sidon set in $(\Field_p^*, \cdot)\times (\Field_p^*, \cdot)$ with $p-2$ elements.  
\end{cons}

	\begin{lem}\label{Corrimiento}
 If  $A_1=\{(1, z): z\in \Field_p^*\}$, $A_2=\{(z, 1): z\in \Field_p^*\}$ and $A_3=\{(z, z): z\in \Field_p^*\}$ then
\begin{center}
$\I_{\alpha}\ominus\I_{\alpha}=\Field_p^*\times \Field_p^*$\textbackslash{($A_1\cup A_2\cup A_3$)}.
\end{center}
 																	
\end{lem}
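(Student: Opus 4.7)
The plan is to mirror the pattern used for Lemmas \ref{Cuadratica} and \ref{Identidad}: first show that no element of $\I_{\alpha}\ominus\I_{\alpha}$ can lie in $A_1\cup A_2\cup A_3$ (the ``obstruction'' direction), and then finish by a cardinality count which forces equality.

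For the first step, I would take a generic non-identity element of the difference set. For $a, b\in \Field_p^*\setminus\{\alpha\}$ with $(a-\alpha, a)\neq (b-\alpha, b)$, the group operation in $(\Field_p^*,\cdot)\times(\Field_p^*,\cdot)$ gives
\[
(a-\alpha,\,a)\star (b-\alpha,\,b)^{-1}=\bigl((a-\alpha)(b-\alpha)^{-1},\; ab^{-1}\bigr).
\]
I would then rule out each $A_i$ separately. Membership in $A_1$ forces $(a-\alpha)(b-\alpha)^{-1}=1$, hence $a=b$; membership in $A_2$ forces $ab^{-1}=1$, hence $a=b$; membership in $A_3$ forces $(a-\alpha)(b-\alpha)^{-1}=ab^{-1}$, i.e.\ $b(a-\alpha)=a(b-\alpha)$, which simplifies to $\alpha a=\alpha b$ and again $a=b$ since $\alpha\in\Field_p^*$. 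Each case contradicts the requirement that the two pairs $(a-\alpha, a)$ and $(b-\alpha, b)$ be distinct, so $(\I_{\alpha}\ominus\I_{\alpha})\cap(A_1\cup A_2\cup A_3)=\emptyset$.

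For the second step, I would use inclusion--exclusion together with the Sidon property. Note $A_1\cap A_2=A_1\cap A_3=A_2\cap A_3=A_1\cap A_2\cap A_3=\{(1,1)\}$, so
\[
|A_1\cup A_2\cup A_3|=3(p-1)-3+1=3p-5,
\]
and hence
\[
|\Field_p^*\times\Field_p^*\setminus(A_1\cup A_2\cup A_3)|=(p-1)^2-(3p-5)=p^2-5p+6=(p-2)(p-3).
\]
On the other hand, since $\I_{\alpha}$ is Sidon with $|\I_{\alpha}|=p-2$, one has $|\I_{\alpha}\ominus\I_{\alpha}|=2\binom{p-2}{2}=(p-2)(p-3)$. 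Combined with the containment $\I_{\alpha}\ominus\I_{\alpha}\subseteq\Field_p^*\times\Field_p^*\setminus(A_1\cup A_2\cup A_3)$ shown above, equality of cardinalities yields the desired set equality.

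The only place requiring any real thought is the $A_3$ case, since ruling it out is the one step that genuinely uses the shift by $\alpha$; if $\alpha=0$ the argument collapses, which is consistent with Construction \ref{I} (the unshifted case), where $A_3$ does not appear among the excluded cosets. Everything else is a routine verification plus a counting argument, so I do not expect any substantial obstacle.
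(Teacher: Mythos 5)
Your proposal is correct and follows essentially the same route as the paper: a direct verification that elements of $\I_{\alpha}\ominus\I_{\alpha}$ avoid $A_1$, $A_2$ and $A_3$ (with the $A_3$ case reducing to $(ab^{-1}-1)\alpha=0$, exactly the paper's key step), followed by the same inclusion--exclusion count $|A_1\cup A_2\cup A_3|=3(p-1)-2$ and the Sidon cardinality $2\binom{p-2}{2}=(p-2)(p-3)$ to force equality. No gaps.
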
	

\begin{proof}

Indeed, 
\begin{align*}
\I_{\alpha}\ominus\I_{\alpha}=\{((a-\alpha)(b-\alpha)^{-1}, ab^{-1}):a, b, \alpha\in \Field_p^*, a\neq\alpha\neq b, (a-\alpha, a)\neq(b-\alpha, b)\}. 
\end{align*}

Note that $(a-\alpha, a)\neq(b-\alpha, b)$ implies that $a\neq b$. Therefore $ab^{-1}\neq1$ because $b\neq0$ and $(a-\alpha)(b-\alpha)^{-1}\neq1$ since $b-\alpha\neq0$. Hence, the elements of the form $(1, z)$ with $z\in \Field_p^*$ and the elements $(z, 1)$, $z\in \Field_p^*$ do not belong to $\I_{\alpha}\ominus\I_{\alpha}$.\\

Other elements that are not in $\I_{\alpha}\ominus\I_{\alpha}$ are of the form $(z, z)$ with $z\in \Field_p^*$. To see this, suppose that,
\begin{align*}
((a-\alpha)(b-\alpha)^{-1}, ab^{-1})=(z, z),
\end{align*}
for some $z\in \Field_p^*$. Now,
\begin{align*}
a-\alpha &= ab^{-1}(b-\alpha),\\
         &= a-ab^{-1}\alpha.
\end{align*}
Therefore, $(ab^{-1}-1)\alpha=0$, which is not possible. 

\vspace{0.2cm}

As $|\Field_p^*\times\Field_p^*|=(p-1)^2$, $|\I_{\alpha}|=p-2$ and $|A_1\cup A_2\cup A_3|=3(p-1)-2$ since $|A_1|=|A_2|=|A_3|=p-1$ and $A_1\cap A_2=A_1\cap A_3=A_2\cap A_3=A_1\cap A_2\cap A_3=\{(1,1)\}$ then,
\begin{align*}
|\I_{\alpha}\ominus\I_{\alpha}|&=2\genfrac(){0pt}{0}{p-2}{2},\\
&=p^2-5p+6,\\
&= p^2-2p+1-3p+3+2,\\
&= (p-1)^2-(3(p-1)-2).
\end{align*}
This completes the proof. 
\end{proof}	

In the following table we present the deficiency of the previous Sidon sets. Note that the unique Sidon sets with zero deficiency are the Sidon sets of type Singer. Remember that $q$ is a prime power and $p$ is an odd prime.

\begin{table}[ht]
\centering
\begin{tabular}{|c|c|c|c|c|c|}
\hline
  $X$          &  $|X|$                     & $\A$    & $|\A|$ & $|\A-\A|$ & $d(\A)=|X|-|\A-\A|$ \\\hline \noalign{\hrule height 1pt}
$\Z_{q^2+q+1}$ & $q^2+q+1$                  & $\G_0$  & $q+1$ &    $q^2+q+1$ &        0                         \\[0.2cm] \hline
$\Z_{q^2-1}$   & $q^2-1$                    & $\B$    & $q$   &    $q^2-q+1$ &        $q-2$                        \\[0.2cm] \hline
$\Z_{p^2-p}$   & $p^2-p$                    & $\R$    & $p-1$ &    $p^2-3p+3$ &      $2p-3$                        \\[0.2cm] \hline
$(\Field_p, +)\times (\Field_p, +)$ &$p^2$  &$\C$     & $p$   &     $p^2-p+1$    &       $p-1$                        \\[0.2cm] \hline
$(\Field_p, +)\times (\Field_p^*, \cdot)$ &$p^2-p$ &$\I$  & $p-1$  & $p^2-3p+3$   &     $2p-3$                       \\[0.2cm] \hline
$(\Field_p^*, \cdot)\times (\Field_p^*, \cdot)$&$(p-2)^2$ &$\I_{\alpha}$ & $p-2$   & $p^2-5p+7$ &   $3p-6$           \\ \hline
\end{tabular}
\end{table} 

\section{$C_4$-saturated Graphs on Sidon sets}


\subsection*{General Properties}

\begin{defi}
Let $X$ be an (additive) abelian group and $\mathcal{A}$ a subset of $X$, the sum graph $G_{X, \mathcal{A}}=(V, E)$ is formed by  $V=X$ and $\{x, y\}\in E$ if $x+y\in \mathcal{A}$ with $x\neq y$. A vertex $z$ is called absolute if $z+z \in \A$, let $P$ be the set of all absolute vertices.
\end{defi}

It is known that if $\A$ is a Sidon set in $X$, the sum graph $G_{X, \mathcal{A}}$  is $C_4$-free, to see this let $(x_0, x_1, x_2, x_3)$ be a $C_4$ in $G_{X, \mathcal{A}}$ (see Figure \ref{f1} ), then $x_0+x_1=a_1$, $x_1+x_2=a_2$, $x_2+x_3=a_3$ and $x_3+x_0=a_4$ where $a_1, a_2, a_3, a_4 \in \A$. Hence,

\begin{center}
$(x_0+x_1)+(x_2+x_3)=a_1+a_3=a_2+a_4=(x_1+x_2)+(x_3+x_0)$
\end{center}
Thus, $\{a_1, a_3\}=\{a_2, a_4\}$. If $a_1=a_2$ or $a_1=a_4$ then $x_0=x_2$ or $x_1=x_3$.       

\begin{figure}[H]
\centering
\begin{tikzpicture}
\draw (-2.,0.)-- (0.,0.);
\draw (0.,0.)-- (0.,2.);
\draw (0.,2.)-- (-2.,2.);
\draw (-2.,2.)-- (-2.,0.);

\draw (-1.3,2.6) node[anchor=north west] {$a_4$};
\draw (0.1,2.3) node[anchor=north west] {$x_3$};
\draw (-2.8,2.3) node[anchor=north west] {$x_0$};
\draw (-2.8,1.3) node[anchor=north west] {$a_1$};
\draw (0.1,0.3) node[anchor=north west] {$x_2$};
\draw (-2.8,0.3) node[anchor=north west] {$x_1$};
\draw (-1.3,-0.1) node[anchor=north west] {$a_2$};
\draw (0.1,1.3) node[anchor=north west] {$a_3$};
\begin{scriptsize}
\draw [fill=cyan] (-2.,0.) circle (3.0pt);
\draw [fill=cyan] (0.,0.) circle (3.0pt);
\draw [fill=cyan] (-2.,2.) circle (3.0pt);
\draw [fill=cyan] (0.,2.) circle (3.0pt);
\end{scriptsize}
\end{tikzpicture}

\vspace{-0.3cm}

\caption{Cycle $C_4$ in $G_{X, A}$} \label{f1}

\end{figure}
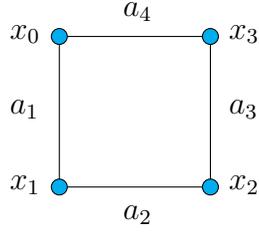

\begin{pro} \label{pro218} Let $X$ be an (additive) finite abelian group and $\A$ be a subset of $X$. \\
 If $G_{X, \A}=(V, E)$ is the sum graph of $\A$ then

\begin{center}
$2|E|=|X||\A|-|P|$
\end{center}
   \end{pro}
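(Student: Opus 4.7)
The plan is to prove the identity via a handshake-type argument, exploiting the bijectivity of translation in the group $X$. By the handshake lemma, $2|E| = \sum_{x \in X} \deg(x)$, so I would count $\deg(x)$ for each vertex $x$ separately and then sum.

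Fix $x \in X$. By the definition of the sum graph, the neighbors of $x$ are exactly the elements $y \in X$ satisfying $y \neq x$ and $x+y \in \A$. The condition $x+y \in \A$ is equivalent to $y \in \A - x := \{a - x : a \in \A\}$. Since the translation $a \mapsto a - x$ is a bijection of $X$, we have $|\A - x| = |\A|$. The candidate set $\A - x$ contains $x$ itself precisely when $2x \in \A$, i.e., when $x \in P$. Therefore
\begin{equation*}
\deg(x) = \begin{cases} |\A| & \text{if } x \notin P, \\ |\A| - 1 & \text{if } x \in P. \end{cases}
\end{equation*}

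Summing over all $x \in X$ yields
\begin{equation*}
2|E| = \sum_{x \in X} \deg(x) = |X|\,|\A| - |P|,
\end{equation*}
as claimed.

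This argument is essentially a bookkeeping calculation, so there is no real obstacle; the only subtlety is recognizing that the loop condition $y = x$ (forbidden because the sum graph is simple) is responsible for exactly the $-|P|$ correction, since these are precisely the vertices $x$ for which the would-be neighbor $a - x$ coincides with $x$.
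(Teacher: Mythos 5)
Your proof is correct and follows essentially the same route as the paper: compute each vertex degree (which is $|\A|$, or $|\A|-1$ when the vertex is absolute) and apply the handshake lemma. You additionally justify the degree count via the translation bijection $y \mapsto x+y$, a detail the paper leaves implicit.
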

	
	\begin{proof}
	Let $x$ be a vertex, then deg($x$)=$|\A|$-1 if $x\in P$ or deg($x$)=$|\A|$ in other case. Therefore,
		
		\begin{center}
		$2|E|=\displaystyle\sum_{x\in P}{\text{deg}(x)}+\displaystyle\sum_{x\notin P}{\text{deg}(x)}=(|\A|-1)|P|+(|X|-|P|)|\A|=|X||\A|-|P|$
		\end{center}
	\end{proof}

	\begin{defi} Let $X$ be an (additive) abelian group and $\A$ be a Sidon set in $X$. For each $z\in X$\textbackslash$\A$ let
		\begin{center}
	$T(z):=\{(a_1, a_2, a_3)\in \A^3 : z=a_1-a_2+a_3\}$.
	\end{center}
		\end{defi}
		
		Note that if $(a_1, a_2, a_3)\in T(z)$ then $a_1\neq a_2$ and $a_2\neq a_3$. Moreover, two  distinct tuples in $T(z)$ have their first or third coordinate different. Indeed, if $(a_1, a_2, a_3)$, $(a_1, a_4, a_5) \in T(z)$ then $a_1-a_2+a_3=a_1-a_4+a_5$, so $a_3+a_4=a_5+a_2$, thus $a_2=a_4$ and $a_3=a_5$ since $\A$ is a Sidon set in $X$ and $a_2\neq a_3$. A similar argument applies to the third coordinate of each two such tuples.\\
		
	 On the other hand, if $x, y \in X$ are distinct elements such that $x+y\notin \A$ then every tuple $(a_1, a_2, a_3)\in T(x+y)$ generate a path of length at most 3 where the vertices are given by $x_{i-1}+x_i=a_i$ with $x_0=x$ and $x_3=y$. In the Figure \ref{f2} is shown the different cases.

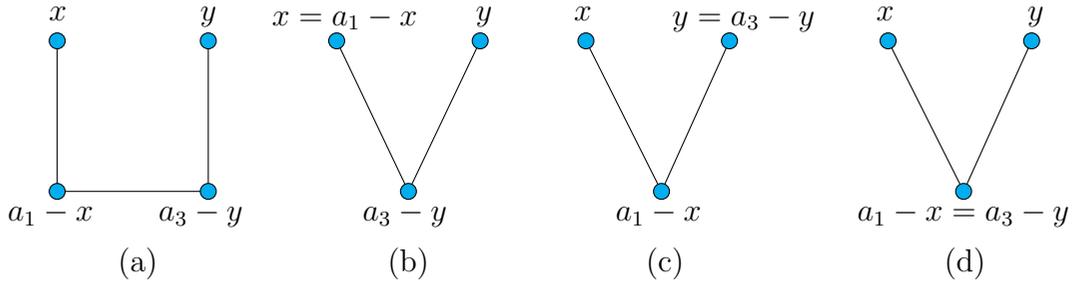
\begin{figure}[H]
\centering
\begin{tikzpicture}
\draw (-2.,2.)-- (0.,2.);
\draw (-2.,4.)-- (-2.,2.);
\draw (0.,2.)-- (0.,4.);

\draw (-0.25,4.6) node[anchor=north west] {$y$};
\draw (-2.25,4.6) node[anchor=north west] {$x$};
\draw (-0.8,2.0) node[anchor=north west] {$a_3-y$};
\draw (-2.8,2.0) node[anchor=north west] {$a_1-x$};
\draw (-1.34,1.4) node[anchor=north west] {(a)};

\begin{scriptsize}
\draw [fill=cyan] (-2.,2.) circle (3.0pt);
\draw [fill=cyan] (0.,2.) circle (3.0pt);
\draw [fill=cyan] (0.,4.) circle (3.0pt);
\draw [fill=cyan] (-2.,4.) circle (3.0pt);
\end{scriptsize}

----------------------------------------------

\draw (1.7,4.)-- (2.65,2.);
\draw (2.65,2.)-- (3.6,4.);

\draw (3.4,4.6) node[anchor=north west] {$y$};
\draw (0.7,4.6) node[anchor=north west] {$x=a_1-x$};
\draw (1.9,2.0) node[anchor=north west] {$a_3-y$};
\draw (2.23,1.4) node[anchor=north west] {(b)};

\begin{scriptsize}
\draw [fill=cyan] (1.7,4.) circle (3.0pt);
\draw [fill=cyan] (2.65,2.) circle (3.0pt);
\draw [fill=cyan] (3.6,4.) circle (3.0pt);
\end{scriptsize}

-------------------------------------------

\draw (5,4.)-- (6,2.);
\draw (6,2.)-- (6.9,4.);

\draw (6.0,4.6) node[anchor=north west] {$y=a_3-y$};
\draw (4.7,4.6) node[anchor=north west] {$x$};
\draw (5.25,2.0) node[anchor=north west] {$a_1-x$};
\draw (5.65,1.4) node[anchor=north west] {(c)};

\begin{scriptsize}
\draw [fill=cyan] (5,4.) circle (3.0pt);
\draw [fill=cyan] (6,2.) circle (3.0pt);
\draw [fill=cyan] (6.9,4.) circle (3.0pt);
\end{scriptsize}
------------------------------------

\draw (9,4.)-- (10,2.);
\draw (10,2.)-- (10.9,4.);

\draw (10.7,4.6) node[anchor=north west] {$y$};
\draw (8.7,4.6) node[anchor=north west] {$x$};
\draw (8.45,2.0) node[anchor=north west] {$a_1-x=a_3-y$};
\draw (9.6,1.4) node[anchor=north west] {(d)};

\begin{scriptsize}
\draw [fill=cyan] (9,4.) circle (3.0pt);
\draw [fill=cyan] (10,2.) circle (3.0pt);
\draw [fill=cyan] (10.9,4.) circle (3.0pt);
\end{scriptsize}
\end{tikzpicture}

\vspace{-0.3cm}

\caption{Paths for a tuple in $T(x+y)$} \label{f2}
\end{figure}

\begin{lem}\label{T1}
Let $X$ be an (additive) abelian group, $\A$ be a Sidon set in $X$ and $x, y$ be two different elements in $X$. If $x+y\notin A$ and $T(x+y)$ is non empty then there exist at most one tuple $(a_1, a_2, a_3)\in T(x+y)$ which satisfies $x=a_1-x$ or $y=a_3-y$ or $a_1-x=a_3-y$. 
					 \end{lem}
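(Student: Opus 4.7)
My approach is to treat the three disjuncts in the conclusion separately and establish, for each one, that at most one tuple of $T(x+y)$ can satisfy it. The three conditions respectively pin down $a_1=2x$, $a_3=2y$, and $a_1-a_3=x-y$, so each reduces to a coordinate-uniqueness claim in $T(x+y)$ that should follow from the Sidon property.

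The first ingredient is already recorded in the paragraph preceding the lemma: two tuples in $T(z)$ sharing their first coordinate must coincide, because the Sidon relation $a_3+a_4=a_5+a_2$ together with $a_2\neq a_3$ forces the remaining coordinates to agree. I would first observe the symmetric companion: by rewriting $z=a_1-a_2+a_3$ as $z=a_3-a_2+a_1$ and repeating the argument, a tuple in $T(z)$ is also determined by its third coordinate. With these two observations, the cases $x=a_1-x$ (which pins $a_1=2x$) and $y=a_3-y$ (which pins $a_3=2y$) are immediate: any two tuples satisfying the first condition share their first coordinate, and any two satisfying the second share their third.

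The remaining disjunct $a_1-x=a_3-y$ is where the real work is. Suppose two tuples $(a_1,a_2,a_3)$ and $(b_1,b_2,b_3)$ in $T(x+y)$ both satisfy this. Then $a_1-a_3=x-y=b_1-b_3$, which I rearrange to $a_1+b_3=b_1+a_3$. Since all four summands lie in $\A$, the Sidon property yields $\{a_1,b_3\}=\{b_1,a_3\}$. The case $a_1=a_3$ would give $x-y=0$, contradicting the hypothesis that $x$ and $y$ are distinct; so we must have $a_1=b_1$ and $b_3=a_3$. The first-coordinate uniqueness observation then forces $(a_1,a_2,a_3)=(b_1,b_2,b_3)$.

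The main obstacle is precisely the third case: the first two are almost formal applications of the coordinate-uniqueness observations, but the third demands that one set up the Sidon collision correctly and then discard the spurious branch $a_1=a_3$ using the assumption $x\neq y$. Notably, in this argument the hypothesis $x+y\notin\A$ is only used implicitly, via the preceding remark which needs $a_2\neq a_3$; the substantive use is of $x\neq y$ and the Sidon property of $\A$.
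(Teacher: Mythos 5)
Your proof is correct and takes essentially the same route as the paper's: one case per disjunct, each resolved by a Sidon collision combined with the first/third-coordinate uniqueness of tuples in $T(z)$ (the paper likewise proves ``at most one per condition,'' which is what the $m-3$ count in Theorem \ref{tuplas} actually uses). The only difference is cosmetic: in the third case you discard the branch $a_1=a_3$ using $x\neq y$ and then invoke first-coordinate uniqueness, while the paper discards the branch $a_1=b_1$ via distinctness of the tuples and then derives the contradiction $x=y$; both orders of resolving the set equality $\{a_1,b_3\}=\{b_1,a_3\}$ work.
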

					
\begin{proof}
Let $(a_1, a_2, a_3), (a_4, a_5, a_6)$ be two distinct tuples in $T(x+y)$.\\
\textbf{Case 1.} If $x=a_1-x$ and $x=a_4-x$, then $a_1=a_4$, which is not possible.\\
\textbf{Case 2.} If $y=a_3-y$ and $x=a_6-y$, then $a_3=a_6$, which is not possible.\\
\textbf{Case 3.} If $a_1-x=a_3-y$ and $a_4-x=a_6-y$ then $a_4-a_1+a_3-y=a_6-y$, so $a_4+a_3=a_6+a_1$. As $\A$ is a Sidon set and the two tuples are distinct then $a_1=a_3$ and $a_4=a_6$. This last implies that $a_1-x=a_1-y$ and therefore $x=y$, which is a contradiction.    
  					\end{proof}
						
						\begin{lem} \label{d1}
		If $A$ is a finite Sidon set in the abelian group $(X, +)$ then  $0\leq|T(z)|\leq|A|$ for all $z\in X$\textbackslash$\A$.
				\end{lem}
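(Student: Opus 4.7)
The lower bound $0 \le |T(z)|$ is immediate, so the whole content of the lemma is the upper bound $|T(z)| \le |\A|$. My plan is to exhibit an injection from $T(z)$ into $\A$, by showing that the first coordinate of a tuple in $T(z)$ already determines the other two. This would immediately give the bound $|T(z)|\le|\A|$.

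First I would fix $a_1 \in \A$ and ask how many tuples in $T(z)$ have this as their first coordinate. The defining equation $z = a_1 - a_2 + a_3$ rewrites as
\[
a_3 - a_2 \;=\; z - a_1,
\]
so such tuples correspond to ordered pairs $(a_2,a_3)\in\A^2$ realising the difference $z-a_1$. The key observation is that $z-a_1\neq 0$: indeed $a_1\in\A$ while $z\in X\setminus\A$, so $a_1\neq z$. (Equivalently, the remark preceding the lemma forces $a_2\neq a_3$, ruling out the zero difference.)

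Now I would invoke the Sidon property. Since $\A$ is a Sidon set, the map $(a,b)\mapsto a-b$ is injective on the set of ordered pairs with $a\neq b$ (this was explicitly noted in Section 2: all non-zero differences of elements of $\A$ are distinct). Hence there is at most one ordered pair $(a_2,a_3)\in\A^2$ with $a_3-a_2 = z-a_1$. In other words, at most one tuple in $T(z)$ has first coordinate $a_1$.

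Summing over the at most $|\A|$ possible values of the first coordinate yields $|T(z)|\le|\A|$, completing the proof. There is no real obstacle here: everything is a one-line consequence of the Sidon definition once one notices that the hypothesis $z\notin\A$ is exactly what is needed to guarantee $z-a_1\neq 0$, so that the Sidon uniqueness of non-zero differences can be applied.
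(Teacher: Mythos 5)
Your proposal is correct and follows essentially the same route as the paper: fix the first coordinate $a_1$, observe that $z-a_1\neq 0$ because $z\notin\A$, and use the uniqueness of nonzero differences in a Sidon set to conclude that $(a_2,a_3)$ is determined, giving at most $|\A|$ tuples. The paper phrases this via a case analysis on whether $z-a_i$ lies in $\A\ominus\A$, but the key injectivity step is identical (and is in fact already recorded in the remark following the definition of $T(z)$).
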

				\begin{proof}
Suppose that $\A=\{a_1, a_2,\cdots, a_k\}$ and $z\in X$\textbackslash$\A$. If $z-a_i \notin \A\ominus \A$ for all $i$, then $|T(z)|=0$. Now, If $z-a_i \in \A\ominus \A$ for $1<i<k$ then $0<|T(z)|<|\A|$ and if $z-a_i \in \A\ominus \A$ for all $i$ then there are two unique (uniqueness is because $\A$ is a Sidon set) $a_j, a_k \in \A$ such taht $z-a_i=a_j-a_k$, this is, $z=a_i-a_k+a_j$. The above implies that each $a_i$ produces a different tuple. As there are $|\A|$ choices for $a_i$ then $|T(z)|=|\A|$. Therefore $0\leq|T(z)|\leq|\A|$ for all $z\in X$\textbackslash$\A$. 
				\end{proof}
				
				Note the following relationship between $d(A)$ and $T(z)$ when $A$ is a Sidon set in the finite abelian group $(X, +)$.
				
				\begin{itemize}
					\item [a.)] $d(\A)=|X|$ if and only if $|T(z)|=0$,
					\item [b.)] $0<d(\A)<|X|$ if and only if $0<|T(z)|<|\A|$,
					\item [c.)] $d(\A)=0$ if and only if $|T(z)|=|\A|$.
				\end{itemize}
				
					\begin{teo}\label{tuplas}
Let $X$ be an (additive) abelian group and $\A$ be a finite Sidon set in $X$. If for all $z \in X$\textbackslash$\A$, $|T(z)|\geq 4$ then $G_{X, \A}$ is $C_4$-saturated. 
\end{teo}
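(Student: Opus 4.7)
The plan is as follows. Since $G_{X,\A}$ is already $C_4$-free by the observation preceding the theorem, it suffices to verify the other half of saturation: for every pair of distinct, non-adjacent vertices $x, y \in X$, adding the edge $\{x, y\}$ creates a copy of $C_4$. Equivalently, I must produce a path $x - a - b - y$ of length three in $G_{X, \A}$ whose four vertices $x, a, b, y$ are pairwise distinct.

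Fix such a pair $x, y$. Non-adjacency and $x \neq y$ mean that $x + y \in X \setminus \A$, so $T(x+y)$ is defined. The key step is a correspondence between length-3 paths from $x$ to $y$ and elements of $T(x+y)$: indeed, any such walk determines $a_1 = x+a$, $a_2 = a+b$, $a_3 = b+y$ in $\A$, and
\[
a_1 - a_2 + a_3 = (x+a) - (a+b) + (b+y) = x+y,
\]
so $(a_1,a_2,a_3)\in T(x+y)$. Conversely, given $(a_1,a_2,a_3)\in T(x+y)$, set $a = a_1 - x$ and $b = a_3 - y$; then $a+b = a_1 + a_3 - (x+y) = a_2 \in \A$, and $x+a = a_1\in\A$, $b+y = a_3\in\A$, so $x - a - b - y$ is a walk in $G_{X,\A}$. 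By the hypothesis $|T(x+y)|\geq 4$ together with Lemma \ref{d1}, this supply of candidate walks is non-trivial.

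The last step is to ensure that some candidate walk has four distinct vertices. Two of the possible collisions are ruled out automatically: $a = y$ would give $a_1 = x + y \in \A$ and $b = x$ would give $a_3 = x + y \in \A$, both contradicting $x + y \notin \A$. The remaining three obstructions are exactly
\[
a = x \iff x = a_1 - x, \qquad b = y \iff y = a_3 - y, \qquad a = b \iff a_1 - x = a_3 - y,
\]
which are precisely the three conditions of Lemma \ref{T1}. That lemma guarantees each is satisfied by at most one tuple of $T(x+y)$, so at most three tuples fail to yield a $C_4$. Since $|T(x+y)|\geq 4$, at least one tuple survives and gives a genuine $C_4$ through the new edge $\{x,y\}$, completing the proof. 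The only real content is the bookkeeping of collisions and recognising them as the three cases covered by Lemma \ref{T1}; this is also what pins down the threshold $4$ in the hypothesis.
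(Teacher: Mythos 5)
Your proof is correct and follows essentially the same route as the paper: set up the bijection between tuples of $T(x+y)$ and candidate walks $x$--$(a_1-x)$--$(a_3-y)$--$y$, then use Lemma \ref{T1} to discard at most three degenerate tuples, so the hypothesis $|T(x+y)|\geq 4$ leaves a genuine path of length three. You are in fact slightly more careful than the paper in explicitly ruling out the collisions $a_1-x=y$ and $a_3-y=x$ via $x+y\notin\A$, which the paper leaves implicit.
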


\begin{proof}
Let $x, y$ be different elements in $X$ such that $x+y \notin \A$ and $m=|T(x+y)|$. By Lemma \ref{T1} there are at least $m-3$ tuples $(a_1, a_2, a_3)$ in $T(x+y)$ such that $x\neq a_1-x$, $y\neq a_3-y$ and $a_1-x\neq a_3-y$, i.e. that these tuples do not generate the paths (b), (c) and (d) of Figure \ref{f2} but if the path (a) of Figure \ref{f2}. Thus there are at least $m-3$ paths of length three between $x$ and $y$. So adding the edge $xy$ gives a 4-cycle. Therefore $G_{X, \A}$ is $C_4$-saturated. 
\end{proof}

\begin{obs} \label{cycles} Under the assumptions of Theorem \ref{tuplas}, if $H$ is a graph obtained by adding an edge to $G_{X, \A}$ then $H$ contains at least $|T(z)|-3$ copies of $C_4$.
\end{obs}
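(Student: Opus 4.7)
The plan is to harvest the count already implicit in the proof of Theorem \ref{tuplas}. Let $H$ be obtained from $G_{X,\A}$ by adding an edge $xy$, where $x,y\in X$ are distinct nonadjacent vertices of $G_{X,\A}$, so that $z:=x+y\in X\setminus\A$.

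First I would recall the correspondence used in the proof of Theorem \ref{tuplas}: every triple $(a_1,a_2,a_3)\in T(z)$ produces the ordered sequence of vertices $x,\,a_1-x,\,a_3-y,\,y$ through the sum relations $x_{i-1}+x_i=a_i$ with $x_0=x$ and $x_3=y$. By Lemma \ref{T1}, at most three triples in $T(z)$ can produce a degenerate configuration of type (b), (c) or (d) in Figure \ref{f2}; the remaining (at least) $|T(z)|-3$ triples each yield a genuine path of length three from $x$ to $y$ of type (a).

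Next I would verify that distinct triples give distinct paths, which is exactly the observation made right after the definition of $T(z)$: two different elements of $T(z)$ must differ in their first or third coordinate, hence the associated pairs of intermediate vertices $(a_1-x,\,a_3-y)$ are also different. Each such path, combined with the new edge $xy$, closes up to a $C_4$ of $H$; since $G_{X,\A}$ is itself $C_4$-free, every such $4$-cycle is genuinely new, and distinct paths produce distinct $4$-cycles. This yields at least $|T(z)|-3$ copies of $C_4$ in $H$. The only step requiring a moment of care is the injectivity of the tuple-to-path correspondence, but this is immediate from the Sidon property of $\A$; no real obstacle arises, since the remark is essentially a by-product of the bookkeeping already carried out for Theorem \ref{tuplas}.
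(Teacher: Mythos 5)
Your argument is exactly the one the paper relies on: the remark is a direct by-product of the proof of Theorem \ref{tuplas}, where Lemma \ref{T1} excludes at most three degenerate tuples and each remaining tuple of $T(z)$ yields a path of length three from $x$ to $y$ that closes into a $C_4$ with the new edge. Your added check that distinct tuples give distinct paths (via the first/third-coordinate observation) only makes explicit a point the paper leaves implicit, so the proposal is correct and follows the same route.
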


\begin{pro} Let $X$ be an (additive) abelian group and $\A$ be a Sidon set in $X$. If for all $z\in X$\textbackslash$\A$, $T(z)\neq \emptyset$ then $\A$ is maximal.
\end{pro}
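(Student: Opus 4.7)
The plan is to argue that, under the hypothesis, no element of $X \setminus \A$ can be adjoined to $\A$ while preserving the Sidon property. Since maximality means that $\A$ is not strictly contained in another Sidon set, this will finish the proof.

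First, I would fix an arbitrary $z \in X \setminus \A$ and, using the hypothesis, pick some $(a_1, a_2, a_3) \in T(z)$. The defining identity $z = a_1 - a_2 + a_3$ rewrites as
\[
z + a_2 = a_1 + a_3,
\]
exhibiting a potential sum coincidence inside $\A \cup \{z\}$.

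Next, I would collect the non-equalities available to me. The remark right after the definition of $T(z)$ shows that $a_1 \neq a_2$ and $a_2 \neq a_3$ (otherwise $z$ would collapse to $a_3$ or $a_1$, contradicting $z \notin \A$); also, since $z \notin \A$ and $a_1, a_2, a_3 \in \A$, the element $z$ differs from each of $a_1$, $a_2$, $a_3$.

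Then I would derive a contradiction from assuming $\A \cup \{z\}$ is Sidon: the relation $z + a_2 = a_1 + a_3$ would force $\{z, a_2\} = \{a_1, a_3\}$ (as multisets), which is impossible because $z \notin \{a_1, a_3\} \subseteq \A$. Finally, I would note that even the degenerate subcase $a_1 = a_3$ causes no trouble, since the multiset equality would additionally force $z = a_2 = a_1 \in \A$, again contradicting $z \notin \A$. Consequently $\A \cup \{z\}$ is not Sidon for any $z \in X \setminus \A$, which is exactly maximality.

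The main (and essentially only) obstacle is keeping the case $a_1 = a_3$ clean, so that the multiset-versus-set distinction in the Sidon condition \eqref{s1} is handled transparently; the rest is a straightforward unpacking of the definitions of $T(z)$ and of a Sidon set.
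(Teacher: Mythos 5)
Your proof is correct and follows essentially the same route as the paper's: both extract a tuple $(a_1,a_2,a_3)\in T(z)$, rewrite it as $z+a_2=a_1+a_3$, and conclude that $\A\cup\{z\}$ cannot be Sidon since the forced equality $\{z,a_2\}=\{a_1,a_3\}$ would put $z$ in $\A$. The only cosmetic difference is that you argue directly over every $z$ while the paper argues by contradiction from non-maximality, and you spell out the degenerate case $a_1=a_3$, which the paper leaves implicit.
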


\begin{proof}
By contradiction suppose $\A$ is not maximal. Then there exists $z \in X$\textbackslash$\A$ such that $\A'=\A\cup \{z\}$ is a Sidon set in $X$. If there exists $(a_1, a_2, a_3) \in \A^3$ such that $z=a_1-a_2+a_3$ then $\{z, a_2\}=\{a_1, a_3\}$ since $\A'$ is a Sidon set. This implies that $z \in \A$ which is a contradiction.     
\end{proof}

\begin{teo}\label{maximal} Let $X$ be an (additive) abelian group and $\A$ be a Sidon set in $X$. If $G_{X, \A}$ is $C_4$-saturated then $\A$ is maximal.
\end{teo}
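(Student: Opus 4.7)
The natural approach is to prove the contrapositive: if $\A$ is not maximal, then $G_{X,\A}$ is not $C_4$-saturated. So assume $\A$ is not maximal. By definition there exists $z\in X\setminus \A$ such that $\A':=\A\cup\{z\}$ is still a Sidon set in $X$.

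My goal is to exhibit a non-edge of $G_{X,\A}$ whose addition does not create any copy of $C_4$. The candidate is an edge of the form $\{x,y\}$ with $x\neq y$ and $x+y=z$; this is a non-edge because $z\notin \A$. Once such $x,y$ are chosen, let $H$ be the graph obtained from $G_{X,\A}$ by adding this edge. Then every edge of $H$ is either an edge of $G_{X,\A}$ (so its endpoints sum to an element of $\A\subset \A'$) or the new edge (whose endpoints sum to $z\in\A'$). Therefore $H$ is a subgraph of $G_{X,\A'}$. Since $\A'$ is a Sidon set, the paper's preceding remark guarantees that $G_{X,\A'}$ is $C_4$-free, hence so is $H$. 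This shows $G_{X,\A}$ is not $C_4$-saturated, finishing the contrapositive.

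Alternatively, one can stay entirely within the language of the preceding material: the contrapositive of the proposition just before this theorem gives a $z\in X\setminus \A$ with $T(z)=\emptyset$. Any $C_4$ through the added edge $\{x,y\}$ would yield a length-$3$ path $x\,w\,v\,y$ in $G_{X,\A}$ with $x+w=a_1$, $w+v=a_2$, $v+y=a_3$ in $\A$, and hence $a_1-a_2+a_3=x+y=z$, contradicting $T(z)=\emptyset$. Both routes come down to the same core content, but the first is slicker.

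The main obstacle is the small technical point of choosing $x\neq y$ with $x+y=z$. In most cases this is immediate: pick any $x\in X$ with $2x\neq z$ and set $y=z-x$. The set of $x$ with $2x=z$ is either empty or a coset of the $2$-torsion subgroup, so such an $x$ exists unless $X$ is an elementary abelian $2$-group and $z=0$, which is a degenerate configuration one expects to rule out (it corresponds to an isolated pathology of the sum-graph construction). I would either add a sentence noting this mild hypothesis or invoke the $T(z)=\emptyset$ route which sidesteps the issue entirely since no $C_4$ through $\{x,y\}$ can exist regardless.
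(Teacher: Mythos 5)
Your first route is exactly the paper's argument: assume $\A$ is not maximal, take $z\in X\setminus\A$ with $\A'=\A\cup\{z\}$ a Sidon set, observe that adding an edge $\{x,y\}$ with $x+y=z$ yields a subgraph of the $C_4$-free graph $G_{X,\A'}$, so $G_{X,\A}$ is not saturated. Your proposal is correct and, if anything, slightly more careful than the paper, which asserts $|\{(x,y)\in X\times X: x+y=z\}|\geq 1$ without addressing the requirement $x\neq y$ that you flag.
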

  
	\begin{proof}
	By contradiction suppose $\A$ is not maximal. Then there exists $z\in X$\textbackslash$\A$ such that $\A'=\A\cup \{z\}$ is a Sidon set in $X$, which implies that $G_{X, \A}$ is $C_4$-free. On the other hand,

	\begin{center}
	$|\{(x, y)\in X\times X:x+y=z\}|\geq 1$
	\end{center}
	thus, $E(G_{X, \A})\subset E(G_{X, \A'})$ and so $G_{X, \A}$ is not $C_4$-saturated. 
	\end{proof}

\subsection{Sum Graph of Sidon Set}

In this section we study the sum graph associated to some constructions of Sidon sets, which are $C_4$-saturated.
First we consider a Singer Sidon set $\G_0$, which is a subset of the additive abelian group $X = \Z_{q^2+q+1}$ where $q$
is prime power with $|\G_0|=q + 1$.

\begin{lem}\label{lem213}
If $P$ is the set of absolute vertices of $G_{X, \G_0}$ then $|P|=q+1$.
\end{lem}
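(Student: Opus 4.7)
The plan is a short counting argument using the definition of absolute vertex. Recall that by definition $z \in P$ if and only if $z+z = 2z \in \mathcal{G}_0$, so
\[
|P| \;=\; \bigl|\{z \in \mathbb{Z}_{q^2+q+1} : 2z \in \mathcal{G}_0\}\bigr|.
\]
So the task is really to count preimages of $\mathcal{G}_0$ under the doubling map $\varphi : \mathbb{Z}_{q^2+q+1} \to \mathbb{Z}_{q^2+q+1}$, $\varphi(z) = 2z$.

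The key observation is that $q^2+q+1$ is odd, because $q^2+q = q(q+1)$ is always even regardless of the parity of $q$. Hence $\gcd(2, q^2+q+1) = 1$, so $2$ is a unit in $\mathbb{Z}_{q^2+q+1}$ and the doubling map $\varphi$ is a bijection of $\mathbb{Z}_{q^2+q+1}$.

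Since $\varphi$ is a bijection, $|\varphi^{-1}(\mathcal{G}_0)| = |\mathcal{G}_0|$, and by Construction \ref{S} we have $|\mathcal{G}_0| = q+1$. Therefore $|P| = q+1$. There is no real obstacle here; the only point worth emphasising in the write-up is the parity of $q^2+q+1$, which makes $2$ invertible and gives the bijection that does all the work.
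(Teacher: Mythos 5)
Your proof is correct and follows essentially the same route as the paper: both rest on the observation that $q^2+q+1$ is odd, so doubling is invertible in $\mathbb{Z}_{q^2+q+1}$ and each $a \in \mathcal{G}_0$ has a unique $z$ with $2z = a$, giving $|P| = |\mathcal{G}_0| = q+1$. Your phrasing via the bijectivity of the doubling map is just a slightly more explicit packaging of the paper's ``unique solution'' argument.
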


\begin{proof}
Since $q^2+q+1$ is odd, for each $a\in \G_0$ the equation $x+x\equiv a \bmod (q^2+q+1)$ has unique solution. Then $|P|=q+1.$
\end{proof}

\begin{teo}\label{ex}
The number of edges in $G_{X, \G_0}$ is $\frac{1}{2}q(q+1)^2$.
\end{teo}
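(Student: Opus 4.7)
The plan is to apply Proposition \ref{pro218} directly, since both of its inputs are already in hand. Proposition \ref{pro218} gives the identity $2|E| = |X||\A| - |P|$ for any sum graph $G_{X,\A}$ over a finite abelian group. Here $X = \Z_{q^2+q+1}$, so $|X| = q^2+q+1$, and $\A = \G_0$, so $|\A| = q+1$ by Construction \ref{S}. The absolute-vertex count $|P| = q+1$ was just established in Lemma \ref{lem213}.

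Substituting these three numbers and factoring should close the proof immediately. First I would write
\[
2|E(G_{X,\G_0})| = (q^2+q+1)(q+1) - (q+1) = (q+1)\bigl((q^2+q+1) - 1\bigr) = (q+1) \cdot q(q+1),
\]
and then divide by $2$ to obtain $|E| = \tfrac{1}{2} q(q+1)^2$.

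There is really no obstacle here; the statement is a plug-and-chug consequence of the two preceding results. The only thing to double-check is that $|\G_0| = q+1$ (as opposed to $|\G| = q$, which is what Construction \ref{S} and the Remark emphasize) so the correct cardinality is used for $\A$. Once that is noted, the computation is a one-liner.
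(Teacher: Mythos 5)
Your proposal is correct and follows exactly the paper's own argument: both apply Proposition \ref{pro218} with $|X| = q^2+q+1$, $|\A| = |\G_0| = q+1$, and $|P| = q+1$ from Lemma \ref{lem213}, then factor out $(q+1)$ to obtain $\tfrac{1}{2}q(q+1)^2$. Nothing further is needed.
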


\begin{proof}
By Proposition \ref{pro218} and Lemma \ref{lem213}

\begin{center}
$|E|=\frac{1}{2}[(q^2+q+1)(q+1)-(q+1)]=\frac{1}{2}(q+1)(q^2+q)=\frac{1}{2}q(q+1)^2$
\end{center}
\end{proof}

\begin{coro}\label{extremal}
For each prime power $q>13$, $ex(q^2+q+1, C_4)=\frac{1}{2}q(q+1)^2$.
\end{coro}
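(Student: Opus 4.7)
The plan is to obtain the claimed equality by sandwiching $ex(q^2+q+1, C_4)$ between matching upper and lower bounds that have essentially already been established. The upper bound $ex(q^2+q+1, C_4) \leq \frac{1}{2}q(q+1)^2$ for every prime power $q > 13$ is Füredi's theorem, cited in the introduction; this is the only place where the hypothesis $q > 13$ enters.

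For the lower bound, I would invoke the sum graph construction. Since $\G_0$ is a Sidon set in $X = \Z_{q^2+q+1}$ (Construction \ref{S}), the discussion right after Definition 3.1 shows that $G_{X, \G_0}$ is $C_4$-free. By Theorem \ref{ex}, this $C_4$-free graph on $|X| = q^2+q+1$ vertices has exactly $\frac{1}{2}q(q+1)^2$ edges, which immediately yields $ex(q^2+q+1, C_4) \geq \frac{1}{2}q(q+1)^2$.

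Combining the two bounds gives the stated equality. There is essentially no obstacle beyond citing the two inputs correctly; the content of the corollary is just that the Singer-based construction $G_{X, \G_0}$ realizes Füredi's upper bound, and hence is extremal whenever $q > 13$.
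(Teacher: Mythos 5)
Your proposal is correct and matches the paper's proof exactly: the lower bound comes from the $C_4$-free sum graph $G_{X,\G_0}$ having $\frac{1}{2}q(q+1)^2$ edges (Theorem \ref{ex}), and the upper bound is F\"uredi's theorem from \cite{Furedi2}, which is where $q>13$ is used. Nothing further is needed.
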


\begin{proof}
Theorem \ref{ex} implies that $ex(q^2+q+1, C_4)\geq\frac{1}{2}q(q+1)^2$. On the other hand, by Theorem 1 de \cite{Furedi2}, $ex(q^2+q+1, C_4)\leq\frac{1}{2}q(q+1)^2$.  
\end{proof}

\begin{obs}
In \cite{Timmons} it is established that $ex(q^2+q, C_4)\geq  \frac{1}{2}q(q+1)^2-q$ if $q$ is a power of two. This bound can be obtained by  removing a vertex of degree $q$ of the sum graph of a Sidon set of type Singer, if $q$ is a power of two. In \cite{Firke} it is established that when $q$ is a power of two, the free graphs of $C_4$ with $q^2+q$ vertices and $\frac{1}{2}q(q+1)^2-q$ edges are obtained by eliminating a vertex of degree $q$ from an orthogonal polarity graph.      
\end{obs}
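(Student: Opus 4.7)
The plan is to verify the central assertion of the remark, namely that deleting a vertex of degree $q$ from the sum graph $G_{X, \G_0}$ of a Singer Sidon set produces a $C_4$-free graph on $q^2+q$ vertices with exactly $\frac{1}{2}q(q+1)^2 - q$ edges, which immediately yields the lower bound $ex(q^2+q, C_4) \geq \frac{1}{2}q(q+1)^2 - q$. The two framing sentences of the remark are citations to \cite{Timmons} and \cite{Firke}; I would not attempt to reprove those from the tools developed in this paper, and in particular I would treat \cite{Firke}'s structural characterization as a black box.

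First I would assemble the ingredients already at hand. Theorem \ref{ex} says $G_{X, \G_0}$ has $q^2+q+1$ vertices and $\frac{1}{2}q(q+1)^2$ edges, and the paragraph immediately after the definition of the sum graph shows this graph is $C_4$-free because $\G_0$ is a Sidon set. From the proof of Proposition \ref{pro218} the degree of a vertex equals $|\G_0|=q+1$ if the vertex is non-absolute and $|\G_0|-1 = q$ if it is absolute; hence the vertices of degree exactly $q$ are precisely the elements of $P$. By Lemma \ref{lem213} we have $|P| = q+1 > 0$, so such a vertex exists. The argument then finishes in one line: pick any absolute vertex $v$ and form $G_{X, \G_0} - v$. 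It has $q^2+q+1-1 = q^2+q$ vertices and $\frac{1}{2}q(q+1)^2 - q$ edges, and vertex deletion cannot produce a new $C_4$, so it is $C_4$-free and witnesses the claimed lower bound.

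The only conceptual subtlety worth flagging is that the hypothesis "$q$ a power of two" is not actually used by the construction I have just outlined: the existence of a degree-$q$ vertex uses only that $q^2+q+1$ is odd, which holds for every positive integer $q$ because $q(q+1)$ is even. The restriction in the remark is inherited from how \cite{Timmons} states the lower bound and, more substantially, from \cite{Firke}'s uniqueness statement that every extremal $C_4$-free graph on $q^2+q$ vertices with that many edges arises from an orthogonal polarity graph by removing a degree-$q$ vertex. Proving that uniqueness statement would be the genuinely hard part if one set out to reprove the entire remark from scratch, as it requires structural analysis of orthogonal polarity graphs in characteristic two and lies outside the Sidon-set framework developed in Sections 2 and 3.
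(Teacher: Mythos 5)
Your verification is correct and follows exactly the route the paper intends: the remark itself is stated without proof, and the construction it describes (delete an absolute vertex, which by Proposition \ref{pro218} and Lemma \ref{lem213} has degree $q$ and exists, from the $C_4$-free graph of Theorem \ref{ex}) is precisely what you carry out. Your observation that the power-of-two hypothesis is not needed for the existence of a degree-$q$ vertex, only for the statements inherited from the cited references, is also accurate.
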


In the following table we present similar results that we obtained for the Sidon sets $\B, \R, \I$ and $\I_{\alpha}$. In this table $n$ is the cardinal of the ambient group of each of the Sidon sets, $|P|$ is the cardinal of the set of absolute points, $|E|$ is the number of edges of the sum graph associated with the Sidon set. In the case of a Sidon set of type Bose-Chowla the number of absolute points is $q$ when $q$ is an even prime power and $q-1$ when $q$ is an odd prime power. In the other cases, $p$ is an odd prime.   

\begin{table}[ht]
\centering
\begin{tabular}{|l|c|c|c|c|}
\hline
   &  $|P|$                     & $|E|   \leq    ex(n, C_4)$ \\\hline \noalign{\hrule height 1pt}
$\B$     & $q$                      & $\frac{q^3-2q}{2}$              $\leq                ex(q^2-1, C_4)$    \\[0.2cm] \hline
$\mathcal{B}$     & $q-1$                        & $\frac{q^3-2q+1}{2}$                  $\leq               ex(q^2-1, C_4)$      \\[0.2cm] \hline
$\mathcal{R}$     & $p-1$                      & $\frac{p^3-2p^2+1}{2}$                 $\leq            ex(p^2-p, C_4)$       \\[0.2cm] \hline
$\mathcal{C}$     & $p$                        & $\frac{p^3-p}{2}$                              $\leq   ex(p^2, C_4)$           \\[0.2cm]\hline
$\mathcal{I}$     & $p-1$                      & $\frac{p^3-2p^2+1}{2}$                   $\leq          ex(p^2-p, C_4)$        \\[0.2cm] \hline
$\mathcal{I}_a$   & $p-4-(-1)^{\frac{p-1}{2}}$ & $\frac{p^3-4p^2+4p+2+(-1)^{\frac{p-1}{2}}}{2}$  $\leq  ex((p-1)^2, C_4)$        \\[0.2cm]\hline
 \end{tabular}
\end{table} 

\begin{lem}\label{triples}
Let $X$ be an (additive) abelian group and $\A$ be a Sidon set in $X$. For each $z\in X$\textbackslash$\A$
\begin{align*}
 \text{(i)} \hspace{0.1cm} &|T(z)|\geq q-1 \hspace{0.1cm} \text{if} \hspace{0.1cm} X=\Z_{q^2-1} \hspace{0.1cm} \text{and} \hspace{0.1cm} A=\B. \hspace{0.1cm}&   \text{(iv)} \hspace{0.1cm} &|T(z)|\geq p-1 \hspace{0.1cm} \text{if} \hspace{0.1cm} X=\Field_p\times \Field_p \hspace{0.1cm} \text{and} \hspace{0.1cm} A=\C. \hspace{0.1cm}&\\
\text{(ii)} \hspace{0.1cm} &|T(z)|= q+1 \hspace{0.1cm} \text{if} \hspace{0.1cm} X=\Z_{q^2+q+1} \hspace{0.1cm} \text{and} \hspace{0.1cm} A=\G_0. \hspace{0.1cm}& \text{(v)} \hspace{0.1cm} &|T(z)|\geq p-3 \hspace{0.1cm} \text{if} \hspace{0.1cm} X=\Field_p\times \Field_p^* \hspace{0.1cm} \text{and} \hspace{0.1cm} A=\I. \hspace{0.1cm}&\\
\text{(iii)} \hspace{0.1cm} &|T(z)|\geq p-3 \hspace{0.1cm} \text{if} \hspace{0.1cm} X=\Z_{p^2-p} \hspace{0.1cm} \text{and} \hspace{0.1cm} A=\R. \hspace{0.1cm}& \text{(vi)} \hspace{0.1cm} &|T(z)|\geq p-5 \hspace{0.1cm} \text{if} \hspace{0.1cm} X=\Field_p^*\times \Field_p^* \hspace{0.1cm} \text{and} \hspace{0.1cm} A=\I_{\alpha}. \hspace{0.1cm}&\\ 						
\end{align*}  
\end{lem}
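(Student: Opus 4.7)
The plan is to reduce every case to a single counting identity drawn from the proof of Lemma \ref{d1}. By the Sidon property, each $a\in\A$ with $z-a\in\A\ominus\A$ determines a unique pair $(a_j,a_k)\in\A^2$ satisfying $z-a=a_j-a_k$, and hence a unique triple $(a,a_k,a_j)\in T(z)$. Since $z\notin\A$ rules out $z-a=0$, the conditions $z-a\in\A-\A$ and $z-a\in\A\ominus\A$ coincide here, so summing over $a\in\A$ yields
\[
|T(z)|\;=\;|\A|\;-\;|\A\cap(z-D)|,
\]
where $D:=X\setminus(\A-\A)$ is the deficiency set (in the multiplicative settings of Constructions \ref{I} and \ref{CO}, $z-D$ is read as $z\star D^{-1}$). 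Each of the six bounds will thus follow from an upper bound on $|\A\cap(z-D)|$.

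For the cyclic cases (i) and (iii), Lemmas \ref{b1} and \ref{Ru} show that $D$ is contained in the nonzero part of a union of subgroups $M_i=\{x\in X:i\mid x\}$ of $X$. Since $\A\ominus\A$ is disjoint from each such $M_i$, no two distinct elements of $\A$ can lie in the same coset of $M_i$, so $|\A\cap(z+M_i)|\le 1$. Summing over the one (resp.\ two) subgroups appearing in $D$ gives $|T(z)|\ge q-1$ in (i) and $|T(z)|\ge p-3$ in (iii). Case (ii) is immediate because $D=\emptyset$ by Lemma \ref{Singer1}, so $|T(z)|=|\G_0|=q+1$.

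For the product cases (iv)--(vi), Lemmas \ref{Cuadratica}, \ref{Identidad}, and \ref{Corrimiento} describe $D$ as (a subset of) a union of one, two, or three affine fibres (the sets $A_1,A_2,A_3$ of those lemmas). For each such fibre $L$, the intersection of the defining curve of $\A$---the parabola $\{(a,a^2)\}$, the diagonal $\{(a,a)\}$, or the shifted diagonal $\{(a-\alpha,a)\}$---with the translate $z-L$ reduces to a linear equation in $\Field_p$ with at most one admissible solution. The only non-trivial computation is intersecting $\I_{\alpha}$ with the translate of the diagonal $A_3$ in case (vi), which leads to $(z_2-z_1)t=\alpha$: a unique root when $z_1\neq z_2$, none otherwise. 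Summing the contributions (at most one per fibre) then gives $|T(z)|\ge p-1$, $p-3$, and $p-5$ respectively.

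The main obstacle is a small piece of bookkeeping in (v) and (vi), where the identity of $X$ lies in the union $A_1\cup A_2\,(\cup\, A_3)$ but not in $D$ (because the identity always belongs to $\A\star\A^{-1}$). This discrepancy is absorbed harmlessly: removing the identity from that union corresponds to removing $z$ itself from $z\star D^{-1}$, and since $z\notin\A$ the intersection $\A\cap(z\star D^{-1})$ is unaffected.
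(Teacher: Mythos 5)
Your proof is correct and takes essentially the same approach as the paper: in both arguments each $a\in\A$ with $z-a\in\A\ominus\A$ yields a unique triple of $T(z)$, and the structural Lemmas \ref{b1}--\ref{Corrimiento} are used to show that at most one element of $\A$ is lost per excluded subgroup or fibre. Your version merely packages the case analysis into the single identity $|T(z)|=|\A|-|\A\cap(z-D)|$ and handles the identity-element bookkeeping slightly more explicitly than the paper does.
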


\begin{proof} 

\begin{itemize}
\item [(i)] Suppose that $\B=\{a_1, a_2,\cdots, a_{q}\}$. Let $z$ be an element in $\Z_{q^2-1}$\textbackslash$\B$, then by Lemma \ref{b1} there exists at most one element $a_1 \in \B$ such that $z-a_1 \in M_{q-1}$. This implies that $z-a_i\in \B\ominus \B$ for all $2\leq i \leq q$, as $|\B|=q$ then $q-1\leq|T(z)|\leq q$. 

\item [(ii)] Let $z$ be an element in $\Z_{q^2+q+1}$\textbackslash$\G_0$, by Lemma \ref{Singer1} $z-a\in \G_0\ominus \G_0$ for all $a\in \G_0$, as $|\G_0|=q+1$ then $|T(z)|=q+1$. 
 
	\item [(iii)] 
Suppose that $\R=\{a_1, a_2,\cdots, a_{p-1}\}$. Let $z$ be an element in $\Z_{p^2-p}$\textbackslash$\R$, then by Lemma \ref{Ru} there exists at most one element $a_1\in \R$ such that $z-a_1 \in M_p$, similarly there exists at most one element $a_2 \in \R$ such that $z-a_2 \in M_{p-1}$. This implies that $z-a_i\in \R\ominus \R$, for all $3\leq i \leq p-1$. In addition, note that there exists a unique pair $b_i, c_i \in \R$ such that such that $z-a_i=c_i-b_i$. Therefore, $p-3\leq |T(z)|\leq p-1.$  
  
\item [(iv)] Suppose that $\C=\{(a_0, a_0^2), (a_1, a_1^2),\cdots, (a_{p-1}, a_{p-1}^2) \}$. If $(x, y)\in \Field_p\times\Field_p$\textbackslash${\C}$, then by Lemma \ref{Cuadratica} $(x, y)-(a_k, a_k^2) \in \C\ominus \C$, $0\leq k\leq p-1$, unless $x-a_k\equiv 0 \bmod p$, that is, if
 $x=a_k$ for some $k$.  Without loss of generality suppose that $x=a_0$. So, $(x, y)-(a_k, a_k^2) \in \C\ominus \C$, for all $1\leq k\leq p-1$. Therefore, $p-1\leq|T(z)|\leq p.$

\item [(v)] Let $(x, y)\in \Field_p\times\Field_p^*$\textbackslash${\I}$ \, and \,  $(a, a)\in \I$. Note that by Lemma \ref{Identidad} $(x, y)\star(a, a)^{-1}\in \I\ominus \I$ unless,
\begin{center}
$(x-a, ya^{-1})=(0,t), t\in \Field_p^*$ \hspace{0.2cm} or \hspace{0.2cm} $(x-a, ya^{-1})=(s, 1), s\in \Field_p^*$.
\end{center}
The first occurs only if $x=a$, so there are $p-2$ choices for $a$ because $a\in \Field_p^*$. The second occurs if $ya^{-1}=1$, that is, if
 $y=a$, hence there are $p-3$ choices for $a$ which guarantee that $(x, y)\star(a, a)^{-1}\in \I\ominus \I$, so there are unique $(b, b), (c, c)\in \I$ such that $(x, y)\star(a, a)^{-1}=(c, c)\star(b, b)^{-1}$. In this case there are at least $p-3$ tuples that satisfy the requirement. Thus, $|p-3\leq |T(z)|\leq p-1|$ 

\item [(vi)] Let $(x, y)\in X$\textbackslash${\I_{\alpha}}$ and $(a-\alpha, a)\in I_{\alpha}$. By Lemma \ref{Corrimiento} $(x, y)(a-\alpha, a)^{-1} \in \I_{\alpha}\ominus \I_{\alpha}$, unless,
\begin{align*}
(x, y)(a-\alpha, a)^{-1}  &= (1, z), z\in \Field_p^*,\\
(x, y)(a-\alpha, a)^{-1}  &= (s, 1), s\in \Field_p^*, \hspace{0.3cm} \text{or}\\
 (x, y)(a-\alpha, a)^{-1} &= (t, t), t\in \Field_p^*;     
\end{align*}
that is, if $x\equiv (a-\alpha) \bmod p$, $y\equiv a \bmod p$ or $xy^{-1}\equiv (1-a^{-1}\alpha)\bmod p$. These congruences have a unique solution. From the above and the fact that $|I_{\alpha}|=p-2$ it follows that there are at least $p-5$  elements $(a-\alpha, a)$ in $\I_\alpha$ with $(x, y)(a-\alpha, a)^{-1}\in \I_\alpha\ominus \I_\alpha$. Therefore, $p-5\leq |T(z)|\leq p-2$.
\end{itemize}  \end{proof}

According to Theorem \ref{tuplas} if we prove the existence of at least 4 tuples in $T(z)$ for any $z\in X$\textbackslash${\A}$ then the corresponding sum
graph $G_{X, \A}$ is $C_4$-saturated. Then Lemma \ref{triples} implies the following theorem.

\begin{teo}\label{teo217}
In each of the following cases the sum graph $G_{X, \A}$ is $C_4$-saturated
\begin{align*}
 \text{(i)} \hspace{0.1cm} & X=\Z_{q^2-1} \hspace{0.1cm} \text{and} \hspace{0.1cm} A=\B. \hspace{1.5cm}&   \text{(iv)} \hspace{0.1cm} & X=\Field_p\times \Field_p \hspace{0.1cm} \text{and} \hspace{0.1cm} A=\C. \hspace{0.1cm}&\\
\text{(ii)} \hspace{0.1cm} & X=\Z_{q^2+q+1} \hspace{0.1cm} \text{and} \hspace{0.1cm} A=\G_0. \hspace{1.5cm}& \text{(v)} \hspace{0.1cm} & X=\Field_p\times \Field_p^* \hspace{0.1cm} \text{and} \hspace{0.1cm} A=\I. \hspace{0.1cm}&\\
\text{(iii)} \hspace{0.1cm} & X=\Z_{p^2-p} \hspace{0.1cm} \text{and} \hspace{0.1cm} A=\R. \hspace{1.5cm}& \text{(vi)} \hspace{0.1cm} & X=\Field_p^*\times \Field_p^* \hspace{0.1cm} \text{and} \hspace{0.1cm} A=\I_{\alpha}. \hspace{0.1cm}&\\ 						
\end{align*} \end{teo}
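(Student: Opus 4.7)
The plan is to assemble Theorem \ref{teo217} as a direct corollary of Theorem \ref{tuplas} combined with Lemma \ref{triples}. Theorem \ref{tuplas} reduces $C_4$-saturation of $G_{X,\A}$ to the hypothesis $|T(z)| \geq 4$ for every $z \in X \setminus \A$, and Lemma \ref{triples} provides, in each of the six constructions, a linear lower bound for $|T(z)|$ in the underlying parameter ($q$ or $p$). Thus the proof reduces to verifying that each of these lower bounds is at least $4$.

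Concretely, I would proceed case by case. For (i) the bound $|T(z)| \geq q - 1$ gives $|T(z)| \geq 4$ as soon as $q \geq 5$, so this covers every prime power except $q \in \{2, 3, 4\}$. For (ii) the equality $|T(z)| = q + 1$ yields $|T(z)| \geq 4$ whenever $q \geq 3$. For (iii) and (v) the bound $|T(z)| \geq p - 3$ suffices for $p \geq 7$. For (iv) the bound $|T(z)| \geq p - 1$ suffices for $p \geq 5$. For (vi) the bound $|T(z)| \geq p - 5$ forces $p \geq 11$ (since $p$ is odd). In each case we may invoke Theorem \ref{tuplas} to conclude that $G_{X,\A}$ is $C_4$-saturated.

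The only real obstacle is the handful of small parameters not covered by the linear bounds above, in particular $q \in \{2, 3, 4\}$ for case (i), $q = 2$ for case (ii), $p \in \{3, 5\}$ for cases (iii), (iv), (v), and $p \in \{3, 5, 7\}$ for case (vi). For these exceptional parameters the ambient group is very small (for instance $|\Z_{q^2 - 1}| \leq 15$ or $|\Field_p^* \times \Field_p^*| \leq 36$), so one can either verify saturation directly by a short case analysis, or simply state the theorem with the harmless hypothesis that $p$ (respectively $q$) is sufficiently large, matching the bounds above. In the write-up I would state the theorem under the latter mild parameter hypothesis and then simply remark that the proof is the juxtaposition of Theorem \ref{tuplas} and Lemma \ref{triples}, since no new argument is needed beyond the two ingredients already available.
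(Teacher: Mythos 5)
Your proposal follows exactly the paper's own argument: the paper derives Theorem \ref{teo217} by combining Theorem \ref{tuplas} with Lemma \ref{triples} in precisely this way, with no further content. In fact you are more careful than the paper, which silently ignores the small parameters (e.g.\ $q\in\{2,3,4\}$ in case (i), $p\leq 7$ in case (vi)) for which the linear lower bounds on $|T(z)|$ fall below $4$; your remark that these exceptional cases require either a direct check or an explicit ``sufficiently large'' hypothesis identifies a genuine (if minor) omission in the published proof.
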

		
\vspace{-1.0cm}
		
		\begin{coro}
		For each Sidon set $\A$ of type Bose-Chowla, Singer, Ruzsa, Cartesian Product 1,2 and 3. $G_{X, \A}$ is
$C_4$-saturated if and only if $\A$ is maximal.
		\end{coro}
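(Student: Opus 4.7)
The plan is to observe that the corollary is an immediate consequence of two results already proved in the paper, so no fresh argument is required, and the biconditional separates cleanly into two one-line implications.

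For the direction \emph{$G_{X,\A}$ is $C_4$-saturated $\Rightarrow$ $\A$ is maximal}, I would simply invoke Theorem \ref{maximal}. That theorem was proved for an arbitrary Sidon set in an arbitrary abelian group (by a contrapositive argument: if $\A \cup \{z\}$ is still Sidon for some $z \notin \A$, then there is at least one pair $x,y$ with $x+y = z$, and adding the edge $xy$ introduces no $C_4$). In particular it applies verbatim to each of the six listed constructions $\B$, $\G_0$, $\R$, $\C$, $\I$, $\I_\alpha$, giving the implication for free.

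For the direction \emph{$\A$ is maximal $\Rightarrow$ $G_{X,\A}$ is $C_4$-saturated}, the key observation is that Theorem \ref{teo217} has already established $C_4$-saturation \emph{unconditionally} for each of the six families $\B$, $\G_0$, $\R$, $\C$, $\I$, $\I_\alpha$; maximality of $\A$ is not needed in that proof, which instead rests on the inequality $|T(z)|\geq 4$ supplied by Lemma \ref{triples} together with Theorem \ref{tuplas}. Hence whenever $\A$ is one of these sets \emph{and} happens to be maximal, the conclusion $G_{X,\A}$ is $C_4$-saturated follows purely from membership in the family.

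Since both directions are immediate corollaries of already-proved statements, there is no real obstacle here; the substantive mathematical content was discharged earlier, in the case-by-case verification of $|T(z)| \geq 4$ in Lemma \ref{triples} and in the general implication of Theorem \ref{maximal}. The corollary merely packages these two facts into a single characterization, and one could reasonably present the whole proof in two or three lines: ``($\Rightarrow$) by Theorem \ref{maximal}; ($\Leftarrow$) by Theorem \ref{teo217}.''
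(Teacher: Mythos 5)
Your proposal is correct and follows essentially the same route as the paper: the forward implication is Theorem \ref{maximal}, and the reverse implication holds because saturation is already unconditional for these six families (the paper phrases this by appealing directly to Lemma \ref{triples}, while you cite Theorem \ref{teo217}, but both rest on the same $|T(z)|\geq 4$ verification). Your observation that the maximality hypothesis is never actually used in the reverse direction is accurate and matches the logical structure of the paper's own argument.
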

		
		\begin{proof}
		Theorem \ref{maximal} proves that if $G_{X, \A}$ is $C_4$-saturated then $\A$ is maximal. In the other case, consider $z$
an element in the corresponding group such that $z \notin \A$. Then Lemma \ref{triples} implies that there exists a tuple in
$T(2z-z)$ such that generates a path of length 3 between $2z$ and $-z$ in $G_{X, \A}$, therefore $G_{X, \A}$ is $C_4$-saturated.
		\end{proof}
		
\noindent
\textbf{\textit{Proof of Theorem \ref{pr}}.} As $d(\A)=0$ then $\A-\A=X$, this implies that if $z\in X$\textbackslash${\A}$ then $z-a\in \A\ominus\A$ for all $a \in \A$. Thus $|T(z)|=|\A|=\sqrt{|X|}-\delta$ and by Remark \ref{cycles} if $H$ is a graph obtained by adding an edge to $G_{X, \A}$ then $H$ contains at least $|T(z)|-3=\sqrt{|X|}-\delta-3$ copies of $C_4$. $\hfill\square$ 
  			
\begin{eje}
Let $q$ be a prime power and $X =\Z_{q^2+q+1}$. By Lemma \ref{Singer1} $d(\G_0)=0$ and by Lemma \ref{triples} $|T(z)|=q+1$ for all $z\in X$\textbackslash${\G_0}$. Thus by Theorem \ref{pr} if $H$ is a graph obtained by adding an edge to the sum graph $G_{X, \G_0}$ then $H$ contains at least $|T(z)|-3=(q+1)-3=q-2$ copies of $C_4$, this is, $H$ contains at least $\sqrt{|X|}+o(\sqrt{|X|})$ copies of $C_4$. On the other hand, by Theorem \ref{ex} the number of edges of the graph $G_{X, \G_0}$ is $ex(q^2+q+1,C_4)$. Hence the conjecture of Erd\H{o}s and Simonovits is satisfied in $G_{X, \G_0}$.
\end{eje}

		\begin{coro}
		Let $\A$ be a Sidon set of type Bose-Chowla, Singer, Ruzsa, Cartesian Product 1,2 and 3. If $H$ is a
graph obtained by adding an edge to $G_{X, \A}$ then $H$ contains at least
\begin{align*}
 \text{(i)} \hspace{0.1cm} & q-4 \hspace{0.1cm} \text{copies of} \hspace{0.1cm} C_4 \hspace{0.1cm} \text{if} \hspace{0.1cm} A=\B. \hspace{1.5cm}&   \text{(iv)} \hspace{0.1cm} & p-6 \hspace{0.1cm} \text{copies of} \hspace{0.1cm} C_4 \hspace{0.1cm} \text{if} \hspace{0.1cm} A=\I.   \hspace{0.1cm}&\\
\text{(ii)} \hspace{0.1cm} & p-6 \hspace{0.1cm} \text{copies of} \hspace{0.1cm} C_4 \hspace{0.1cm} \text{if} \hspace{0.1cm} A=\R. \hspace{1.5cm}& \text{(v)} \hspace{0.1cm} & p-8 \hspace{0.1cm} \text{copies of} \hspace{0.1cm} C_4 \hspace{0.1cm} \text{if} \hspace{0.1cm} A=\I_{\alpha}.  \hspace{0.1cm}&\\
 \text{(iii)} \hspace{0.1cm} & p-4 \hspace{0.1cm} \text{copies of} \hspace{0.1cm} C_4 \hspace{0.1cm} \text{if} \hspace{0.1cm} A=\C.  \hspace{0.1cm}&\\ 						
\end{align*} 
		\end{coro}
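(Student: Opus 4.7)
The plan is to derive this corollary as a direct consequence of Remark \ref{cycles}, which is itself an immediate strengthening of Theorem \ref{tuplas}, together with the case-by-case lower bounds on $|T(z)|$ already obtained in Lemma \ref{triples}. Since Theorem \ref{teo217} certifies that $G_{X, \A}$ is $C_4$-saturated in each of the five cases, the hypothesis $|T(z)| \geq 4$ of Theorem \ref{tuplas} is satisfied for the relevant ranges of $p$ and $q$, so Remark \ref{cycles} applies legitimately.

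First, I would unpack the mechanism behind Remark \ref{cycles}. Any edge $\{x, y\}$ that is added to $G_{X, \A}$ joins two nonadjacent vertices, so $x \neq y$ and $z := x + y \notin \A$. Each tuple $(a_1, a_2, a_3) \in T(z)$ produces a walk $x \longrightarrow a_1 - x \longrightarrow a_3 - y \longrightarrow y$ of length $3$ in $G_{X, \A}$, and Lemma \ref{T1} shows that at most three of these walks can be degenerate (the configurations labelled (b), (c), (d) in Figure \ref{f2}). Every non-degenerate tuple gives four distinct vertices, and together with the newly added edge $\{x, y\}$ it completes a distinct copy of $C_4$ in $H$. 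Thus the number of new $4$-cycles is at least $|T(z)| - 3$.

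It remains to substitute the bounds from Lemma \ref{triples} into this estimate:
\begin{align*}
\A = \B &: |T(z)| \geq q - 1 \ \Longrightarrow \ \text{at least } q - 4 \text{ copies of } C_4, \\
\A = \R &: |T(z)| \geq p - 3 \ \Longrightarrow \ \text{at least } p - 6 \text{ copies of } C_4, \\
\A = \C &: |T(z)| \geq p - 1 \ \Longrightarrow \ \text{at least } p - 4 \text{ copies of } C_4, \\
\A = \I &: |T(z)| \geq p - 3 \ \Longrightarrow \ \text{at least } p - 6 \text{ copies of } C_4, \\
\A = \I_{\alpha} &: |T(z)| \geq p - 5 \ \Longrightarrow \ \text{at least } p - 8 \text{ copies of } C_4.
\end{align*}
This matches the five items in the statement.

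There is no real obstacle here; the conceptual content has already been absorbed by Theorem \ref{tuplas}, Remark \ref{cycles}, and the counting in Lemma \ref{triples}. The corollary is a packaging step whose only subtlety is checking that the hypothesis $x + y \notin \A$ used to invoke Lemma \ref{triples} is automatic for a newly added edge, which it is by the definition of the sum graph.
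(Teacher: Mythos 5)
Your proposal is correct and follows exactly the route the paper intends (and leaves implicit): apply Remark \ref{cycles}, which gives at least $|T(z)|-3$ copies of $C_4$ for the added edge $\{x,y\}$ with $z=x+y\notin\A$, and then substitute the case-by-case lower bounds on $|T(z)|$ from Lemma \ref{triples}; all five arithmetic substitutions check out. This is the same packaging step the paper itself performs in the Singer example, so there is nothing further to add.
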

		
		\vspace{-1.0cm}
			
		\section*{Acknowledgement}
The authors would like to thank Master and Ph. D programs in Mathematics of Universidad del Cauca (Colombia).
Also first two authors would like to thank COLCIENCIAS and Universidad del Cauca for supporting
the research project ``Aplicaciones a la teoría de la información y comunicación de los Conjuntos de Sidon y
sus generalizaciones'' (Códigos 110371250560, VRI - 4400). The third author would like to thank VIPRI -
Universidad de Nariño for supporting the research project ``Subálgebras de Mishchenko-Fomenko en $U(gl_{n})$ y
secuencias regulares'' (Código 1475).
		
		\addcontentsline{toc}{chapter}{Bibliografía}
\markboth{Bibliografía}{Bibliografía}
\bibliographystyle{unsrt}

  \end {document}